\documentclass[11pt,a4paper]{article}

\usepackage{latexsym,amssymb,amsfonts,amsmath,epsfig,tabularx}
\usepackage{lscape}
\usepackage{bbm}
\usepackage{dsfont}
\usepackage{pdfsync}
\usepackage{xfrac}
\usepackage{subcaption}


\usepackage{algpseudocode,algorithm} 
\floatname{algorithm}{Pseudocode} 


\algdef{SE}[DOWHILE]{Do}{doWhile}{\algorithmicdo}[1]{\algorithmicwhile\ #1}%

\usepackage[x11names]{xcolor}
\usepackage[colorlinks=true,linkcolor=SteelBlue3,citecolor=SteelBlue3,urlcolor=SteelBlue3]{hyperref}
\usepackage[a4paper]{geometry}
\usepackage{amsthm}
   \theoremstyle{plain}

   \newtheorem{lemma}{Lemma}
   \newtheorem{corollary}{Corollary}

   \theoremstyle{definition}

   \theoremstyle{theorem} 
   \newtheorem{remark}{Remark}

\definecolor{darkred}{RGB}{139,0,0}
\definecolor{darkgreen}{RGB}{0,100,0}
\definecolor{darkmagenta}{RGB}{139,0,139}

\graphicspath{{.}{./code/Figures/}{./graphs/}}

\def\bbN{\mathbb{N}}

\def\eps{\varepsilon}
\def\e{\varepsilon}

\newcommand{\coloneqq}{:=}

\newcommand{\mm}{m}

\newcommand{\setu}{{\mathfrak{u}}}
\newcommand{\setv}{{\mathfrak{v}}}
\newcommand{\setw}{{\mathfrak{w}}}

\newcommand{\bigO}{\mathcal{O}}
\newcommand{\cost}{\mathrm{cost}}

\newcommand{\setU}{{\mathcal{U}}}

\newcommand{\satop}[2]{\stackrel{\scriptstyle{#1}}{\scriptstyle{#2}}}

\newcommand{\rd}{{\mathrm{d}}}

\newcommand{\calA}{{\mathcal{A}}}

\newcommand{\calI}{{\mathcal{I}}}

\newcommand{\calM}{{\mathcal{M}}}

\newcommand{\calP}{{\mathcal{P}}}

\newcommand{\calU}{{\mathcal{U}}}
\newcommand{\bsz}{{\boldsymbol{z}}}
\newcommand{\bsi}{{\boldsymbol{i}}}

\newcommand{\bsk}{{\boldsymbol{k}}}
\newcommand{\bsl}{{\boldsymbol{\ell}}}

\newcommand{\bsx}{{\boldsymbol{x}}}

\newcommand{\bst}{{\boldsymbol{t}}}
\newcommand{\bsp}{{\boldsymbol{p}}}
\newcommand{\bsq}{{\boldsymbol{q}}}

\newcommand{\bsw}{{\boldsymbol{w}}}
\newcommand{\bszero}{{\boldsymbol{0}}}
\newcommand{\bsone}{{\boldsymbol{1}}}

\newcommand{\bstau}{{\boldsymbol{\tau}}}
\newcommand{\bsDelta}{{\boldsymbol{\Delta}}}

\newcommand{\bbE}{{\mathbb{E}}}
\newcommand{\bbR}{{\mathbb{R}}}

\newcommand{\mask}[1]{}



\newcommand{\Uext}[0]{\ensuremath{\calU_\mathrm{ext}}}

\newcommand{\supdim}[0]{\sigma^*}

\makeatletter
\newcommand{\vast}{\bBigg@{4}}
\newcommand{\Vast}{\bBigg@{5}}
\makeatother

\newcommand{\pd}[3]{\dfrac{\partial ^{#1} #2}{\partial #3}}

\newcommand{\bs}[1]{\boldsymbol{#1}}
\newcommand{\nrm}[2]{\ensuremath{\left\|#1\right\|_{#2}}}

\newcommand{\N}[0]{\mathbb{N}}
\newcommand{\R}[0]{\mathbb{R}}

\makeatletter
\algnewcommand{\Statenonumber}[1]{\Statex \hskip\ALG@thistlm #1}
\makeatother

\title{Efficient implementations of the Multivariate Decomposition Method for
Approximating Infinite-Variate Integrals}

\author{Alexander D. Gilbert
             \and Frances Y. Kuo
             \and Dirk Nuyens
             \and Grzegorz W. Wasilkowski}
             

\begin{document}

\maketitle



\renewcommand{\thefootnote}{\fnsymbol{footnote}}


\begin{abstract}
In this paper we focus on efficient implementations of the Multivariate
Decomposition Method (MDM) for approximating integrals of
$\infty$-variate functions. Such $\infty$-variate integrals occur for
example as expectations in uncertainty quantification. Starting with the
anchored decomposition $f = \sum_{\setu\subset\bbN} f_\setu$, where the
sum is over all finite subsets of $\bbN$ and each $f_\setu$ depends only
on the variables $x_j$ with $j\in\setu$, our MDM algorithm approximates
the integral of $f$ by first truncating the sum to some `active set' and
then approximating the integral of the remaining functions $f_\setu$
term-by-term using Smolyak or (randomized) quasi-Monte Carlo (QMC)
quadratures. The anchored decomposition allows us to compute $f_\setu$
explicitly by function evaluations of $f$. Given the specification of the
active set and theoretically derived parameters of the quadrature rules,
we exploit structures in both the formula for computing $f_\setu$ and the
quadrature rules to develop computationally efficient strategies to
implement the MDM in various scenarios. In particular, we avoid repeated
function evaluations at the same point. We provide numerical results for a
test function to demonstrate the effectiveness of the algorithm.
\end{abstract}

\section{Introduction}\label{sec:intro}

The \emph{Multivariate Decomposition Method} (MDM) is an algorithm for
approximating the integral of an $\infty$-variate function $f$ defined on
some domain $D^\bbN$ with $D\subseteq\bbR$, and this paper presents the
first results on the implementation of the MDM. The general idea of the
MDM, see \cite{GilW17, KNPSW17, PW14, Was13a, Was14a} (as well as
\cite{KSWW10a,PW11} under the name of \emph{Changing Dimension
Algorithm}), goes as follows. Assume that $f$ admits a decomposition
\begin{align} \label{eq:fdecomp}
 f(\bsx) \,=\, \sum_{\setu\subset\bbN} f_\setu(\bsx_\setu),
\end{align}
where the sum is taken over all finite subsets of
\[
  \N \,\coloneqq\, \{1,2,3,\ldots\},
\]
and where each function $f_\setu$ depends only on the variables in
$\bsx_\setu=(x_j)_{j\in\setu}$. With $\rho$ a given probability density
function on $D$ and $\rho_\setu(\bsx) \coloneqq \prod_{j\in\setu} \rho(x_j)$, we
define the integral of $f$ by
\begin{align} \label{eq:int}
  \calI(f) \,\coloneqq\, \sum_{\setu\subset\bbN} I_\setu(f_\setu),
 \qquad\mbox{with}\qquad
 I_\setu(f_\setu) \,\coloneqq\, \int_{D^{|\setu|}} f_\setu(\bsx_\setu)
\rho_\setu(\bsx_\setu) \,\rd \bsx_\setu,
\end{align}
and let $I_\emptyset(f_\emptyset) \coloneqq f_\emptyset$. The MDM
algorithm for approximating the integral is
\begin{equation} \label{eq:mdm}
  \calA(f) \,\coloneqq\, \sum_{\setu\in\calU} A_\setu(f_\setu),
\end{equation}
where $\calU$ is the ``active set''; for $\setu\ne\emptyset$, each
$A_\setu$ is a $|\setu|$-dimensional quadrature rule, and $A_\emptyset
(f_\emptyset) := f_\emptyset$.

The error of the MDM algorithm satisfies the trivial bound
\begin{equation} \label{eq:triangle}
  |\calI(f) - \calA(f)|
  \,\le\, \sum_{\setu\notin\calU} |I_\setu(f_\setu)|
  \,+\, \sum_{\setu\in\calU} |I_\setu(f_\setu) - A_\setu(f_\setu)|.
\end{equation}
Given $\eps>0$, the strategy is to first choose an active set $\calU$ such
that the first sum in \eqref{eq:triangle} is at most $\eps/2$, and then
specify the quadrature rules such that the second sum in
\eqref{eq:triangle} is also at most $\eps/2$, giving a total error of at
most $\eps$. It is known that the sets $\setu\in\setU$ have
cardinalities increasing very slowly with decreasing $\e$,
\[\max_{\setu\in\setU(\eps)}|\setu|\,=\,
O\left(\frac{\ln(1/\e)}{\ln(\ln(1/\e))}\right)\quad
\mbox{as\ }\e\to0,
\]
see, e.g., \cite{GilW17,PW11}.

We would need to impose additional conditions on the class of functions to
ensure that the sum \eqref{eq:fdecomp} is absolutely convergent, the
integral \eqref{eq:int} is well defined, and the quadrature rules in
\eqref{eq:mdm} converge appropriately to the corresponding integrals. The
precise details will depend on the mathematical setting within which we
choose to analyze the problem. We will outline some variants below, but
this is not the focus of the present paper.

The main focus of this paper is on the implementation of the MDM
algorithm, which involves the two following steps.
The first step is to construct the active set given an abstract 
definition of $\calU$ from the theory.
Then in the second step, supposing we are given an active set and the
choice of quadrature rules $A_\setu$, 
 we develop computationally efficient
strategies to evaluate \eqref{eq:mdm} in certain scenarios by exploiting
specific structures in the MDM algorithm and the quadrature rules of
choice.
Specifically,
\begin{itemize}
\item we assume a \emph{product and order dependent} (POD) structure
    in the definition of the active set $\calU$;
\item we utilize the \emph{anchored decomposition} of functions; and
\item we consider \emph{quasi-Monte Carlo methods} and \emph{Smolyak's
    methods} as two alternatives for the quadrature rules
    $A_\setu$.
\end{itemize}

In Section~\ref{sec:act} we explain the structure of our active set
$\calU$ and provide an efficient strategy to construct it. Once the active
set $\calU$ has been constructed, we need to evaluate the quadrature rules
$A_\setu(f_\setu)$ for each $\setu\in\calU$; this is formulated in
Section~\ref{sec:form}. In this paper we use the anchored decomposition
\cite{KSWW10b} of $f$ so that the terms $f_\setu$ can be computed
explicitly via
\begin{equation} \label{eq:explicit}
  f_\setu(\bsx_\setu)
  \,=\, \sum_{\setv\subseteq\setu} (-1)^{|\setu|-|\setv|} f(\bsx_\setv;\bszero),
\end{equation}
where $f(\bsx_\setv;\bszero)$ indicates that we evaluate the function at
$f(\bst)$ with components $t_j = x_j$ for $j\in\setv$ and $t_j = 0$ for
$j\notin\setv$. Throughout this paper, by a ``naive'' implementation
of the MDM algorithm, we mean an implementation which computes the sum in
\eqref{eq:mdm} term by term, with each $f_\setu$ evaluated using
\eqref{eq:explicit}.

We consider only linear algorithms $A_\setu$ as the quadrature rules and
our MDM algorithm can therefore be expressed as
\begin{equation} \label{eq:mdm2}
  \calA(f)
  \,=\, \sum_{\setu\in\calU} \sum_{\setv\subseteq\setu} (-1)^{|\setu|-|\setv|}
  A_\setu(f(\cdot_\setv;\bszero)).
\end{equation}
Notice inside the double sum in \eqref{eq:mdm2} that we would be applying
a $|\setu|$-dimensional quadrature rule to a function which depends only
on a subset $\setv\subseteq\setu$ of the variables. Moreover, the same
evaluations of $f$ could be repeated for different combinations of $\setu$
and $\setv$, while in practice the cost of evaluating $f$ could be quite
expensive. We will exploit structures in the quadrature rules to save on
repeated evaluations in~\eqref{eq:mdm2}.

In Section~\ref{sec:smo} we first consider \emph{Smolyak quadrature} to be
used as the quadrature rules $A_\setu$ (see, e.g., \cite{GG98, Smol63}).
Then in Section~\ref{sec:qmc} we consider instead an \emph{extensible
quasi-Monte Carlo} (\emph{QMC}) \emph{sequence} to be used for the
quadrature rules (see, e.g., \cite{CKN06,DKS13}). In both sections we
explain how to regroup the terms by making use of the recursive structure
and how to store some intermediate calculations for the specific
quadrature rules to evaluate \eqref{eq:mdm2} efficiently.

Section~\ref{sec:smolyak} considers two different approaches to implement
the Smolyak quadratures: the direct method and the combination technique.
In Section~\ref{sec:RQMC} we consider a \emph{randomized} quasi-Monte
Carlo sequence for the quadrature rules. This enables us to obtain an
unbiased result and a practical estimate of the quadrature error for the
MDM algorithm.

Each variant of our MDM algorithm involves three stages, as outlined
in the pseudocodes; a summary is given as follows:
\[
\begin{tabular}{|l|l|}
 \hline
 Pseudocodes 1 + 2A + 3A & Smolyak MDM -- direct implementation \\
 Pseudocodes 1 + 2A$'$ + 3A$'$ & Smolyak MDM -- combination technique \\
 Pseudocodes 1 + 2B + 3B & Extensible QMC MDM \\
 Pseudocodes 1 + 2B + 3B$'$ & Extensible randomized QMC MDM \\
 \hline
 \end{tabular}
\]

In Section~\ref{sec:T} we derive a computable expression for
estimating an infinite series that may appear in the definition of the
active set, which is another novel and significant contribution 
of this paper. Finally in Section~\ref{sec:num} we combine all ingredients
and follow the mathematical setting of \cite{KNPSW17} to construct the
active set and choose the quadrature rules. We then apply the MDM
algorithm to an example integrand that mimics the characteristics of the
integrands arising from some parametrized PDE problems (see, e.g.,
\cite{KSS12}).

\section{Constructing the active set} \label{sec:act}

Letting $w(\setu)$ be a measure of the ``significance'' of the subset
$\setu$, we assume that the mathematical analysis yields the definition of
an active set of the general form
\begin{align} \label{eq:active}
  \calU \,\coloneqq\, \{ \setu \subset\bbN \,:\, w(\setu) > T \},
\end{align}
where $T > 0$
is a ``threshold'' parameter that depends on the overall error demand
$\eps>0$ and possibly on all of $w(\setu)$. For example, $w(\setu)$ can be
related to the weight parameters from a weighted function space setting
(as in \cite{Was13a,Was14a,PW14,GilW17}), or it can be related to the
bounds on the norm of $f_\setu$ (as in \cite{KNPSW17}).

In this section we will treat $T$ and $w(\setu)$ as input parameters
(ignoring the mathematical details of where they come from), and focus on
the efficient implementation of the active set given these parameters.
Then in Section~\ref{sec:T} we will consider a special form of $T$
(arising from analysis) which requires numerical estimation of an infinite
series.

We assume $w(\emptyset) > T$ so that we always have
$\emptyset\in\calU$. Furthermore, we assume specifically for
$\setu\ne\emptyset$ that $w(\setu)$ takes the \emph{product and order
dependent} (POD) form (a structure that first appeared in \cite{KSS12}):
\begin{align} \label{eq:POD}
  w(\setu) \,\coloneqq\, \Omega_{|\setu|}\,\prod_{j\in\setu} \omega_j,
\end{align}
where $\omega_1\ge\omega_2\ge\cdots$ is a non-increasing sequence of
nonnegative real numbers controlling the ``product aspect", and $\Omega_1,
\Omega_2,\ldots$ is a second sequence of nonnegative real numbers
controlling the ``order dependent aspect", with the
restriction on $\Omega_\ell$
that its growth is controlled by $\omega_\ell$, i.e.,
$\Omega_{\ell + 1} \omega_{\ell+1} \leq \Omega_\ell$ for all $\ell
\in \N$.
This assumption is satisfied
in all practical cases that we are aware of.
Further, in the theoretical framework for the MDM (see, e.g., \cite{KNPSW17}), 
a sufficient condition for the infinite-dimensional integral to be well-defined is for the parameters 
$w(\setu)$ to be summable,
$\sum_{\setu \subset \N} w(\setu) < \infty$,
which will not hold unless the condition $\Omega_{\ell+1} \omega_{\ell+1} \leq \Omega_{\ell}$
holds (at least asymptotically in $\ell$).

With the active set defined by \eqref{eq:active} and \eqref{eq:POD},
we make a couple of obvious remarks:
\begin{enumerate}
\item If $\setv\in\calU$ then $\setu\in\calU$ for all sets $\setu$
    satisfying $w(\setu)\ge w(\setv)$.
\item If $\setu\notin\calU$ then $\setv\notin\calU$ for all sets
    $\setv$ satisfying $w(\setu)\ge w(\setv)$.
\end{enumerate}
We identify any finite set $\setu\subset\bbN$ with a vector containing the
elements of $\setu$ in increasing order, i.e., if $|\setu| = \ell$ then
\[
  \setu \,\coloneqq\, (u_1,u_2,\ldots,u_\ell), \quad u_1<u_2<\cdots<u_\ell.
\]
Then, due to our assumed POD structure in \eqref{eq:POD}, we note that
\begin{enumerate}
\setcounter{enumi}{2}
\item $w(\setu)\ge w(\setv)$ if $|\setu|=|\setv|$ and $u_i\le v_i$ for
    all $i = 1, \dotsc, \ell$.
\item $w(\{1, \ldots, \ell\}) \geq w(\{1, \ldots, \ell + 1\})$ for all
    $\ell \in \N$.
\item\label{itm:dwopen} For any $\setu\in\calU$, a subset of $\setu$ need \emph{not} be
    included in $\calU$.
\end{enumerate}
Note that if the opposite of Item~\ref{itm:dwopen} were true, i.e., every subset of
$\setu\in\calU$ also belongs to $\calU$, then the set $\calU$ is said to
be ``downward closed'' in some papers; we do not impose this condition.

\smallskip

Combining the above, we deduce the following simple lemma.

\begin{lemma} \label{lem:dim}
Assume that the active set $\calU$ is defined by \eqref{eq:active} and
\eqref{eq:POD}.
\begin{itemize}
\item \textnormal{(``Superposition dimension'')} Let $\supdim$ be the
    largest possible value of $\ell$ for which $(1,2,\ldots,
    \ell)\in\calU$, i.e., $w(\{1,2,\ldots,\ell\})>T$. Then for all
    $\setu\in\calU$ we have $|\setu|\le \supdim$.

\item \textnormal{(``Truncation dimension for sets of order $\ell$'')}
    For any $\ell=1,\ldots,\supdim$, let $\tau_\ell$ be the largest possible
    value of $j\ge \ell$ for which $(1,2,\ldots,\ell-1,j)\in\calU$,
    That is, $w(\{1,2,\ldots,\ell-1,j\})>T$. Then for all
    $\setu\in\calU$ with $|\setu|= \ell$, we have $u_\ell\le
    \tau_\ell$; and consequently, $i\le u_i\le \tau_\ell - \ell + i$
    for all $1\le i\le \ell$.

\item \textnormal{(``Truncation dimension'')} Let $\tau^*$ be the
    largest possible value of $j$ for which $j\in\setu\in\calU$, i.e.,
    $\tau^* = \max_{\setu\in\calU} \max_{j\in\setu} j$. Then $\tau^* =
    \max_{1\le\ell\le \supdim} \tau_\ell$.
\end{itemize}
\end{lemma}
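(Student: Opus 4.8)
The plan is to prove the three bullets in sequence, using the monotonicity facts 1--4 collected just before the lemma. For the superposition-dimension claim, first I would observe that by Items~3 and 4 the chains $(1,2,\dots,\ell)$ are, in a precise sense, the ``easiest'' sets of a given cardinality to keep in $\calU$: for any $\setu=(u_1,\dots,u_\ell)$ with $|\setu|=\ell$ we have $u_i\ge i$ for all $i$, so Item~3 gives $w(\setu)\le w(\{1,\dots,\ell\})$. Hence if $\setu\in\calU$, i.e.\ $w(\setu)>T$, then $w(\{1,\dots,\ell\})>T$, so $\ell\le\supdim$ by the very definition of $\supdim$ as the largest such $\ell$. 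This shows $|\setu|\le\supdim$ for all $\setu\in\calU$. (One should note in passing that $\supdim$ is well-defined and finite: $w(\{1,\dots,\ell\})$ is non-increasing in $\ell$ by Item~4, and summability of $w$ forces $w(\{1,\dots,\ell\})\to 0$, so only finitely many $\ell$ satisfy $w(\{1,\dots,\ell\})>T$; also $\supdim\ge 0$ is consistent with $\emptyset\in\calU$, and $\supdim\ge 1$ exactly when the singleton $\{1\}\in\calU$.)

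For the truncation-dimension-for-order-$\ell$ claim, fix $\ell\le\supdim$ and take any $\setu=(u_1,\dots,u_\ell)\in\calU$. I want to compare $\setu$ with the set $\{1,2,\dots,\ell-1,u_\ell\}$. Both have cardinality $\ell$; their last elements agree; and for $i=1,\dots,\ell-1$ the $i$-th element of $\{1,\dots,\ell-1,u_\ell\}$ is $i\le u_i$. By Item~3, $w(\setu)\le w(\{1,\dots,\ell-1,u_\ell\})$, so $w(\{1,\dots,\ell-1,u_\ell\})>T$, which by definition of $\tau_\ell$ gives $u_\ell\le\tau_\ell$. (Here I should check $\tau_\ell$ is well-defined: $w(\{1,\dots,\ell-1,j\})=\Omega_\ell\,\omega_j\prod_{i<\ell}\omega_i$ is non-increasing in $j$ since $\omega_j$ is, and tends to $0$, and at $j=\ell$ it equals $w(\{1,\dots,\ell\})>T$, so the set of valid $j$ is nonempty and finite.) For the consequence, the upper bound $u_i\le\tau_\ell-\ell+i$ comes from the fact that $u_i<u_{i+1}<\dots<u_\ell\le\tau_\ell$ are $\ell-i+1$ distinct integers not exceeding $\tau_\ell$, so $u_i\le\tau_\ell-(\ell-i)$; the lower bound $u_i\ge i$ is immediate since $u_1<\dots<u_i$ are $i$ distinct positive integers.

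For the truncation-dimension claim, I would argue by two inequalities. First, $\tau^*\ge\tau_\ell$ for each $\ell\le\supdim$: by definition of $\tau_\ell$ the set $\{1,\dots,\ell-1,\tau_\ell\}$ lies in $\calU$ and contains the element $\tau_\ell$, so $\tau^*=\max_{\setu\in\calU}\max_{j\in\setu}j\ge\tau_\ell$; taking the maximum over $\ell$ gives $\tau^*\ge\max_{1\le\ell\le\supdim}\tau_\ell$. Conversely, let $\setu\in\calU$ attain $\tau^*$, say $\tau^*\in\setu$ with $|\setu|=\ell$. Then $\ell\le\supdim$ by the first bullet, and $\tau^*\le u_\ell\le\tau_\ell\le\max_{1\le\ell'\le\supdim}\tau_{\ell'}$ by the second bullet (using $\tau^*\le u_\ell$ because $\tau^*\in\setu$ and $u_\ell=\max\setu$). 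Combining the two inequalities yields $\tau^*=\max_{1\le\ell\le\supdim}\tau_\ell$.

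There is no real obstacle here; the only thing requiring a little care is to make sure the three quantities $\supdim$, $\tau_\ell$, $\tau^*$ are finite and the relevant monotone sequences genuinely attain a last index above the threshold, which is where the POD structure \eqref{eq:POD}, the constraint $\Omega_{\ell+1}\omega_{\ell+1}\le\Omega_\ell$, and summability of $w$ enter. Everything else is a direct application of the comparison principle in Item~3 to a cleverly chosen ``left-shifted'' representative set of the same cardinality.
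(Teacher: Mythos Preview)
Your proof is correct and follows essentially the same approach as the paper: both arguments hinge on comparing an arbitrary $\setu\in\calU$ with its ``left-shifted'' counterpart $(1,\dots,\ell)$ or $(1,\dots,\ell-1,u_\ell)$ via Item~3, the only difference being that you argue directly while the paper phrases the first two bullets by contradiction. Your added remarks on the finiteness of $\supdim$ and $\tau_\ell$ and the explicit two-sided inequality for the third bullet are welcome details that the paper leaves implicit.
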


\begin{proof}
For the first point, suppose on the contrary that $\setu =
(u_1,\ldots,u_{\supdim+1})\in\calU$. Then letting $\setv \coloneqq
(1,\ldots,\supdim+1)$ we have $w(\setv)\ge w(\setu) > T$, which indicates that
$\setv\in\calU$, contradicting the definition of $\supdim$.

To demonstrate the second point, suppose on the contrary that $\setu =
(u_1,\ldots,u_\ell)\in\calU$ with $u_\ell > \tau_\ell$. Then we have
$\setv = (1,\ldots,\ell-1,u_\ell)$ with $w(\setv)\ge w(\setu) > T$, which
indicates that $\setv\in\calU$, but this contradicts the definition of
$\tau_\ell$. The bound on $u_i$ then follows easily.

The third point is straightforward.
\end{proof}

We construct the active set as outlined in Pseudocode~1. 
The algorithm adds the qualifying sets to the collection in the order of
increasing cardinality. For each $\ell\ge 1$, starting from the set
$(1,2,\ldots,\ell)$, the algorithm incrementally generates and checks sets
to be added to the collection. The algorithm terminates when it reaches a
value of $\ell$ for which $(1,2,\ldots,\ell)\notin\calU$, i.e.,
$w(\{1,2,\ldots,\ell\}) \le T$.

\begin{algorithm}[!h]
 \floatname{algorithm}{Pseudocode 1} 
 \caption{(Constructing the active set)} 
\begin{algorithmic}[1]
 \State Add $\emptyset$ to $\calU$
 \For{{\bf $\ell$ from $1$ to $\ell_{\rm threshold}$}}
  \Comment{$\ell_{\rm threshold}$ is a computational threshold}
    \State $\setu \leftarrow (1,2,\ldots,\ell)$
    \State $i \leftarrow \ell$
    \Comment{$i$ is the index for the next increment}
    \Loop
      \If{$w(\setu) > T$}
        \State $i \leftarrow \ell$
        \State Add $\setu$ to $\calU$
    \Comment{add $\setu$ to the active set}
      \Else
        \State $i \leftarrow i - 1$
       \EndIf
      \State \textbf{break the inner loop if $i=0$}
      \Comment{move to next cardinality}
     \For{{\bf $j$ from $i$ to $\ell$}}
        \Comment{increment $\setu$ from $u_i$}
        \State $u_j \leftarrow u_i + j - i + 1$
      \EndFor
    \EndLoop
    \State \textbf{break the outer loop if} no sets of size $\ell$
    found
    \Comment{terminate}
  \EndFor
\end{algorithmic}
\end{algorithm}

The assumptions on the structure of $w(\setu)$ and properties 1--5 above 
ensure that this stopping criteria is valid, and hence that 
Pseudocode~1 does indeed construct the active set \eqref{eq:active}.
In particular, property 3 implies $w(\setu) \leq w(\{1, 2, \ldots, \ell\})$ for all
sets with $|\setu| = \ell$, and then 
Property 4 implies $w(\setu) \leq w(\{1, 2, \ldots, \ell\})$
for all sets with $|\setu| \geq \ell$. Thus, if $\{1, 2, \ldots, \ell\} \notin \calU$
then no set with cardinality $\ell$ or higher is in $\calU$.

We recommend storing the active set $\calU$ as an array of hash tables,
with one table for each cardinality, since in the next section we will
have to iterate over all subsets $\setv \subseteq \setu \in \calU$ and be
able to update a table stored with each such $\setv$.


\begin{remark} \label{rem1}
The paper \cite{GilW17} provides an efficient algorithm to construct
the optimal active set $\calU^{\rm opt}(\e)$, i.e., an active set that has
the smallest cardinality among all active sets $\calU(\e)$ with the same
error demand $\eps$. The construction principle is based on sorting so is
quite different to Pseudocode~1, and it works only for parameters of
product form. Once the active set is constructed, the remaining steps for
implementing the MDM algorithm will be the same as we discuss below.
\end{remark}

\section{Formulating the MDM algorithm}
\label{sec:form}

In this section we outline how to formulate the MDM algorithm
\eqref{eq:mdm2} in a way that is specific to the quadrature rules used, so
that the implementation can be as efficient as possible. We do this by
exploiting the structure in the anchored decomposition
\eqref{eq:explicit}, and also in the quadrature rules, which will be
Smolyak's methods (also known as sparse grid methods) and quasi-Monte
Carlo rules.

In each case we treat the parameters of the quadrature rules (i.e.,
the number of quadrature points or levels) as input, and focus on the
efficient implementation of the MDM given these parameters. Specific
choices of parameters for a test integrand following the theoretical
analysis in \cite{KNPSW17} are given in Section~\ref{sec:num}.

Recall from \eqref{eq:mdm2} that the MDM algorithm using the anchored
decomposition is given by
\[
\calA(f) \,=\, \sum_{\setu \in \calU} \sum_{\setv \subseteq \setu}
(-1)^{|\setu| - |\setv|} A_\setu(f(\cdot_\setv; \bs0))\,.
\]
Clearly there will be subsets $\setv$ that will occur many times over, so
implementing the MDM in this way could be severely inefficient, because it
would evaluate the same functions $f(\cdot_\setv; \bs0)$ at the same
quadrature points over and over again. The goal of this section is to
detail how to implement the quadrature approximations in such a way that
each function $f(\cdot_\setv; \bs0)$ is evaluated at each quadrature point
\emph{once only}.

The first step is to introduce the \emph{extended active set}:
\[
  \calU_\mathrm{ext} \coloneqq \{\setv \subset \N : \setv \subseteq
  \setu \text{ for } \setu \in \calU\}\,,
\]
that is, it includes all subsets of the sets in the active set. Then we
can swap the sums above to give
\begin{align} \label{eq:mdm_ext}
 \calA(f) &\,=\, \sum_{\setv \in \Uext}
 \sum_{\satop{\setu \in \calU}{\setu \supseteq \setv}} (-1)^{|\setu| - |\setv|}
 A_\setu(f(\cdot_\setv; \bs0)) \nonumber\\
 &\,=\, c_\emptyset\,f(\bszero)
 + \sum_{\emptyset \neq \setv \in \Uext}
 \sum_{\satop{\setu \in \calU}{\setu \supseteq \setv}} (-1)^{|\setu| - |\setv|}
 A_\setu(f(\cdot_\setv; \bs0))\,,
\end{align}
where we separated out the $\setv = \emptyset$ terms, with
\begin{align*}
 c_\emptyset \,:=\, \sum_{\setu \in \calU} (-1)^{|\setu|}.
\end{align*}
After constructing the active set $\calU$, we go through it again to
construct the extended active set $\Uext$, and at the same time store
information regarding the superset structure of each element in $\Uext$.
We would like to store just enough details so that for each $\setv \in
\Uext$ we can compute the approximation $\sum_{\setu \in \calU \,:\, \setu
\supseteq \setv} (-1)^{|\setu| - |\setv|} A_\setu(f(\cdot_\setv; \bs0))$
without the need to access the supersets of~$\setv$. Specific details on
how this is done will depend on the quadrature rule used.

\subsection{Quadrature rules based on Smolyak's method} \label{sec:smo}

For a nonempty set $\setu \subset \bbN$ and integer $m\ge 1$, Smolyak's
method (see, e.g., \cite{GG98,Smol63,WW95}) applied to a function 
$g_\setu$ of the variables $\bsx_\setu$ takes
the form
\begin{equation} \label{eq:smolyak}
  Q_{\setu,\mm} (g_\setu)
  \,\coloneqq\,
  \sum_{\satop{\bsi_\setu\in\bbN^{|\setu|}}{|\bsi_\setu|\le |\setu| + \mm - 1}}
  \bigotimes_{j\in\setu} (U_{i_j} - U_{i_j-1}) (g_\setu),
\end{equation}
where $|\bsi_\setu| \coloneqq \sum_{j\in\setu} i_j$, $\mm \ge 1$, and
$\{U_i\}_{i\ge 1}$ is a sequence of one-dimensional quadrature rules, not
necessarily nested, with $U_0 \coloneqq0$ denoting the zero algorithm.
Furthermore we assume that constant functions are integrated exactly, so
that $U_i(1) = 1$ for $i \geq 1$.

For a nonempty subset $\setv\subseteq\setu$, suppose now that the
function $g_\setv$ depends only on the variables $\bsx_\setv$. Then we
have
\begin{align} \label{eq:proj}
  Q_{\setu,\mm} (g_\setv)
  &\,=\, \sum_{\satop{\bsi_\setu\in\bbN^{|\setu|}}{|\bsi_\setu|\le |\setu| + \mm - 1}}
  \bigg(\bigotimes_{j\in\setv} (U_{i_j} - U_{i_j-1})(g_\setv) \bigg)
  \bigg(\bigotimes_{j\in\setu\setminus\setv} (U_{i_j} - U_{i_j-1})(1) \bigg) \nonumber\\
  &\,=\, \sum_{\satop{\bsi_\setu\in\bbN^{|\setu|}}{|\bsi_\setu|\le |\setu| + \mm - 1,\,\bsi_{\setu\setminus\setv}=\bsone}}
  \bigotimes_{j\in\setv} (U_{i_j} - U_{i_j-1})(g_\setv) \nonumber\\
  &\,=\, \sum_{\satop{\bsi_\setv\in\bbN^{|\setv|}}{|\bsi_\setv|\le |\setv| + \mm - 1}}
  \bigotimes_{j\in\setv} (U_{i_j} - U_{i_j-1})(g_\setv) 
  \,=\, Q_{\setv,\mm}(g_\setv).
\end{align}
In the second equality above we used the assumption that the
one-dimensional quadrature rules integrate the constant functions exactly
and thus $(U_{i_j} - U_{i_j-1})(1)$ is~$1$ if $i_j=1$ and is~$0$
otherwise. The above derivation \eqref{eq:proj} indicates how a Smolyak
quadrature rule is projected down when it is applied to a lower
dimensional function. This property is important in our efficient
evaluation of \eqref{eq:mdm_ext}.

In \eqref{eq:mdm_ext} we take
\[
  A_\setu \,\equiv\, Q_{\setu,\mm_\setu},
\]
where the level $\mm_\setu$ determines the number of quadrature points $n_\setu$
used by $Q_{\setu, m_\setu}$.
The exact relationship between $m_\setu$ and $n_\setu$ will depend on the choice of the
one-dimensional quadrature rules $\{U_i\}$.

Here we treat the levels $m_\setu$, hence also the number of points $n_\setu$, as input parameters
to our MDM algorithm. Then we define the maximum level to occur as
\begin{align*}
 \mm_{\max} \,\coloneqq\, \max \{\mm_\setu : \emptyset\ne\setu\in\calU\}.
\end{align*}
For Smolyak grids based on one-dimensional rules $\{U_i\}$ that each use
$\bigO(2^i)$ points (e.g., trapezoidal rules) the value of $\mm_\setu$ is roughly the logarithm of $n_\setu$
(see, e.g., \cite{GG98}).
Hence in practice we observe that
$\mm_{\max}$ is relatively small, e.g., $\mm_{\max}\approx 25$.

Using \eqref{eq:proj} we can rewrite \eqref{eq:mdm_ext} as follows (note
the change from $Q_{\setu,\mm_\setu}$ to $Q_{\setv,\mm_\setu}$):
\begin{align} \label{eq:smo1}
  \calA^\mathrm{S}(f)
  &\,=\,
   c_\emptyset\,f(\bszero) +
  \sum_{\emptyset \neq\setv \in \Uext}  \sum_{\satop{\setu\in\calU}{\setu \supseteq \setv}}  (-1)^{|\setu|-|\setv|}\,
  Q_{\setu,\mm_\setu}(f(\cdot_\setv;\bszero)) \nonumber\\
  &\,=\,  c_\emptyset\,f(\bszero) +
  \sum_{\emptyset \ne\setv\in\calU_{\rm ext}}
  \sum_{\satop{\setu\in\calU}{\setu\supseteq\setv}} (-1)^{|\setu|-|\setv|}\,
  Q_{\setv,\mm_\setu}(f(\cdot_\setv;\bszero)) \nonumber\\
  &\,=\, c_\emptyset\,f(\bszero) + \sum_{\emptyset\ne\setv\in\calU_{\rm ext}}
  \sum_{\satop{m = 1}{c(\setv,m)\ne0}}^{\mm_{\max}} c(\setv,m)\,
  Q_{\setv, m}(f(\cdot_\setv;\bszero))\,,
\end{align}
where for $\setv\ne\emptyset$ and $m=1,\ldots,m_{\max}$ we define
\begin{align} \label{eq:c-smol}
 c(\setv, m) \,\coloneqq\,
 \sum_{\satop{\setu \in \calU,\,\setu \supseteq \setv}{\mm_\setu = m}}
 (-1)^{|\setu| - |\setv|}\,.
\end{align}

\begin{algorithm}[!t]
 \floatname{algorithm}{Pseudocode 2A} 
 \caption{(Constructing the extended active set for Smolyak)}
 \begin{algorithmic}[1]
 \State Initialize $\Uext \leftarrow \calU$
 \Comment start from the active set
 \State Initialize $c_\emptyset \leftarrow 1$
 \Comment for $\setu=\emptyset$
 \For{$\emptyset\ne\setu\in\calU$ with $|\setu|$ from $1$ to $\supdim$}
 \Comment traverse in increasing cardinality
 \State Calculate $m_\setu$
 \Comment formula for $m_\setu$ is given from theory
 \State Update $c_\emptyset \leftarrow c_\emptyset + (-1)^{|\setu|}$
 \State Initialize $c(\setu, m) \leftarrow 0$ for $m$ from $1$ to $\mm_{\max}$
 \For{$\emptyset \neq\setv \subseteq \setu$}
 \Comment generate nonempty subsets
 \If{$\setv \not \in \Uext$}
 \Comment look up and add missing subset
 \State Add $\setv$ to $\Uext$
 \State Initialize $c(\setv, m) \leftarrow 0$ for $m$ from $1$ to $\mm_{\max}$
 \EndIf
 \State Update $c(\setv, m_\setu) \leftarrow c(\setv, m_\setu) + (-1)^{|\setu| - |\setv|}$
 \Comment update relevant entry
 \EndFor \EndFor
\end{algorithmic}
\end{algorithm}
\begin{algorithm}[!h]
 \floatname{algorithm}{Pseudocode~3A} 
 \caption{(Implementing the Smolyak MDM)}
\begin{algorithmic}[1]
 \State Initialize $\calA^{\rm S}(f) \leftarrow c_\emptyset\times f(\bszero)$
 \For{$\emptyset\ne\setv \in \Uext$}
 \For{$m$ from $1$ to $\mm_{\max}$}
 \If{$c(\setv,m)\ne 0$}
 \State Calculate $Q_{\setv,m}(f(\cdot_\setv;\bszero))$ using
   \eqref{eq:Sm_nonnest}--\eqref{eq:nonnest_weights} or
   \eqref{eq:Sm_nest}--\eqref{eq:nest_weights}
 \State Update $\calA^{\rm S}(f) \leftarrow \calA^{\rm
 S}(f) + c(\setv,m)\times Q_{\setv,m}(f(\cdot_\setv;\bszero))$
 \EndIf \EndFor\EndFor \\
 \Return $\calA^{\rm S}(f)$
\end{algorithmic}
\end{algorithm}

The values of $c(\setv, m)$ can be computed and stored while we construct
the extended active set $\Uext$ as follows. We work through the sets in
the active set in order of increasing cardinality.
For each nonempty set $\setu \in \calU$ with required level
$m_\setu$, we generate all nonempty subsets $\setv\subseteq\setu$, add the
missing subsets to $\Uext$, and update $c(\setv, m_\setu)$ as we go. This
procedure is given in Pseudocode~2A.

This formulation \eqref{eq:smo1}--\eqref{eq:c-smol} allows us to compute
the $Q_{\setv,m}(f(\cdot_\setv;\bszero))$ for different supersets $\setu$
with the same value of $\mm_\setu$ only once. If the Smolyak MDM algorithm
is implemented in this way then there is no need to access the superset
structure. Obviously, if $c(\setv, m) = 0$ then we do not perform the
quadrature approximation.

Note that in practice calculating the number of Smolyak levels $m_\setu$
(or the number of QMC points in the next subsection) normally requires
knowledge of the entire active set $\calU$, see, e.g.,
\cite[Section~4.3]{KNPSW17}, hence we compute them when constructing
$\Uext$.

Note also that we do not need a separate data structure for $\Uext$: we can
simply extend $\calU$ to $\Uext$ since Step~9 in
Pseudocode~2A 
only adds subsets with lower cardinalities and would not interfere with
Step~3 since we iterate in increasing cardinality. As we explained in the
previous section, we store the active set $\calU$, and by extension the
extended active set $\Uext$, as an array of hash tables to easily retrieve
the $c(\setv,\cdot)$ table for each $\setv$.

A direct implementation of the MDM algorithm with Smolyak quadratures
is given in Pseudocode~3A. 
The different formulas \eqref{eq:Sm_nonnest}--\eqref{eq:nonnest_weights}
or \eqref{eq:Sm_nest}--\eqref{eq:nest_weights} for implementing the
Smolyak quadrature, which depend on whether we have a non-nested or nested
rule, will be discussed in Section~\ref{sec:smolyak}.

\subsection{Quadrature rules based on quasi-Monte Carlo methods}
\label{sec:qmc}

Here we assume for simplicity that $D = [0,1]$ and $\rho\equiv 1$.
A $d$-dimensional \emph{quasi-Monte Carlo} (QMC) rule with
$n$ points $\bst^{(i)} =
(t^{(i)}_1,t^{(i)}_2,\ldots,t^{(i)}_d)$, $i=0,\ldots,n-1$,
approximates the integral of a function $g$ by the equal-weight average
\begin{equation}
\label{eq:generic_qmc}
  \int_{[0,1]^d} g(\bsx)\,\rd\bsx
  \,\approx\,
  \frac{1}{n} \sum_{i=0}^{n-1} g(\bst^{(i)}).
\end{equation}
For more details on QMC methods we refer to \cite{Nie92,SJ94} and
\cite{DKS13}.

In \eqref{eq:mdm} each $A_\setu$ could be a different
$|\setu|$-dimensional QMC rule with $n_\setu$ points, but in that case we
would not be able to reuse any function evaluation.
Instead, we consider here an ``\emph{extensible quasi-Monte Carlo
sequence}''. By ``extensible'' we mean that we can take just the initial
dimensions of the initial points in the sequence. By ``quasi-Monte Carlo''
we mean that a quadrature rule based on the first $n$ points of this
sequence has equal quadrature weights $1/n$. We choose to use a QMC rule
with $\supdim$ dimensions instead of $\tau^*$, since $\tau^*$ can be really
large (e.g., $30000$) while $\supdim$ is rather small (e.g., $15$), and QMC
rules with fewer dimensions are of better quality.

Then for any nonempty set $\setu\in\calU$ and nonempty subset
$\setv\subseteq\setu$ we have
\begin{equation} \label{eq:A-qmc}
  A_\setu(f(\cdot_\setv;\bszero))
  \,=\, \frac{1}{n_\setu} \sum_{i=0}^{n_\setu-1}
  f\left(\bst^{(i)}_{\setu|\setv\to\setv};\bszero\right),
\end{equation}
where, loosely speaking, $\bst^{(i)}_{\setu|\setv\to\setv}$ indicates 
that we map the quadrature point $\bst^{(i)}$ to the variables $\bsx_\setu$
and then to $\bsx_\setv$, which is \emph{not} the same as mapping
$\bst^{(i)}$ directly to $\bsx_\setv$.
More explicitly, recalling that $\setu$ has ordered elements,
$\bst^{(i)}_{\setu | \setv \to \setv}$ denotes that we
take the first $|\setu|$-dimensions of $\bst^{(i)}$ and apply them to the variables
in $\bsx_\setu$, then retain only those components in
$\bsx_\setv$. 
The function $f$ is then evaluated by anchoring all
other components outside of $\setv$ to zero. 
%
%
%
Thus the algorithm \eqref{eq:mdm_ext} in this case is given by
\begin{align} \label{eq:qmc1}
  \calA^\mathrm{Q}(f)
  &\,=\,c_\emptyset\,f(\bszero) +
  \sum_{\emptyset \neq\setv\in\Uext}
  \sum_{\satop{\setu \in \calU}{\setu\supseteq\setv}} (-1)^{|\setu|-|\setv|}\,
  \left(
  \frac{1}{n_\setu} \sum_{i=0}^{n_\setu-1} f\left(\bst^{(i)}_{\setu|\setv\to\setv}
  ;\bszero\right)
  \right).
\end{align}

For example, 
take $\setu = (1,5,7)$ and $\setv = (1,7)$. We get
$\setu|\setv = (1,3)$ since the set $\setv$ originates from the position
$\setw = (1,3)$ of its superset $\setu$. We assign the quadrature point
$(t_1^{(i)},t_2^{(i)},t_3^{(i)})$ to the variables $(x_1,x_5,x_7)$. Then
the point $(t_1^{(i)},t_3^{(i)})$ is assigned to the variables
$(x_1,x_7)$, and hence we evaluate
$f(t_1^{(i)},0,0,0,0,0,t_3^{(i)},0,\ldots)$.

Note that the same set $\setv = (1,7)$ can originate from the position
$\setw = (1,3)$ of different supersets~$\setu$: for example, $\setu =
(1,6,7)$, $\setu = (1,4,7,13)$, and many others. We can make use of this
repetition to save on computational cost.

Let $\calM(\setv)$ denote the set of all different positions that a
nonempty set $\setv$ can originate from for all its supersets in the
active set:
\[
\calM(\setv) \,\coloneqq\, \left\{\setw \subseteq \{1, \ldots, \supdim\} \,:\,
\setw \,\equiv\, \setu|\setv
\text{ for some } \setu \in \calU \text{ with } \setu \supseteq \setv
\right\}.
\]
For simplicity and for convenience, we assume further that $n_\setu =
2^{m_\setu}$, with $0\le m_\setu\le m_{\max}$ (e.g., with $m_{\max}
\approx 25$). This allows us to rewrite each QMC approximation as a
sum of blocks of points (recall that the QMC points are extensible)
\begin{equation*}
\frac{1}{2^{m_\setu}}\sum_{i = 0}^{2^{m_\setu} - 1}f\left(
\bst^{(i)}_{\setu |\setv \rightarrow \setv}; \bs0\right)
\,=\,
\frac{1}{2^{m_\setu}}\sum_{m = 0}^{m_\setu}
\sum_{i = \lfloor 2^{m - 1} \rfloor}^{2^m - 1}
f\left(\bst^{(i)}_{\setu|\setv \rightarrow \setv}; \bs0\right)
\,,
\end{equation*}
where the floor function is used specifically to take care of the $m=0$
case.
Substituting this into \eqref{eq:qmc1} and introducing a sum over
$\calM(\setv)$, we have
\begin{align*}
  \calA^\mathrm{Q}(f)
  \,=\,c_\emptyset\,f(\bszero)
  \,+\, \sum_{\emptyset \neq \setv\in\Uext}
  \sum_{\setw \in \calM(\setv)}
  \sum_{\substack{\setu \in \calU\\\setu\supseteq\setv\\ \setu|\setv \equiv \setw }} (-1)^{|\setu|-|\setv|}\,
   \frac{1}{2^{m_\setu}}\sum_{m = 0}^{m_\setu}
   \sum_{i = \lfloor 2^{m - 1}\rfloor}^{2^m - 1}
   f\left(\bst^{(i)}_{\setw\rightarrow \setv}; \bs0\right)\,,
\end{align*}
where in the third sum we have added the restriction that $\setu|\setv$ is
equivalent to the position~$\setw$.
Collecting the sums
\begin{equation} \label{eq:s-qmc1}
 S_{\setv, \setw, m}(f) \,\coloneqq\,\sum_{i = \lfloor 2^{m - 1}\rfloor}^{2^m - 1}
 f\left(\bst^{(i)}_{\setw\rightarrow \setv}; \bs0\right)\,,
\end{equation}
we can then rewrite the QMC MDM \eqref{eq:qmc1} as
\begin{align}
\label{eq:qmc}
 \calA^\mathrm{Q}(f) \,=\, c_\emptyset\,f(\bszero)
 + \sum_{\emptyset\ne \setv \in \Uext}
 \sum_{\setw \in \calM(\setv)}
 \sum_{\satop{m=0}{c(\setv, \setw, m)\ne 0}}^{m_{\max}}
 c(\setv, \setw, m)
 \frac{S_{\setv, \setw, m}(f)}{2^{m_{\max}}} \,,
\end{align}

\begin{algorithm}[!h]
 \floatname{algorithm}{Pseudocode 2B} 
 \caption{(Constructing the extended active set for QMC)}
 \begin{algorithmic}[1]
 \State Initialize $\Uext \leftarrow \calU$
 \Comment start from the active set
 \State Initialize $c_\emptyset \leftarrow 1$
 \Comment for $\setu=\emptyset$
 \For{$\emptyset\ne\setu \in \calU$ with $|\setu|$ from $1$ to $\supdim$}
 \Comment traverse in increasing cardinality
 \State Calculate $m_\setu$
 \Comment formula for $m_\setu$ is given from theory
 \State Update $c_\emptyset \leftarrow c_\emptyset + (-1)^{|\setu|}$
 \State Initialize $\calM(\setu) \leftarrow \emptyset$
 \For {$\emptyset \neq \setv \subseteq \setu$}
 \Comment generate nonempty subsets
 \If{$\setv \not \in \Uext$}
 \Comment look up and add missing subset
 \State Add $\setv$ to $\Uext$
 \State Initialize $\calM(\setv) \leftarrow \emptyset$
 \EndIf
 \State Set $\setw \leftarrow \setu|\setv$
 \Comment identify the position where $\setv$ originates from $\setu$
 \If{$\setw \not \in \calM(\setv)$}
 \Comment look up and add missing position
 \State Add $\setw$ to $\calM(\setv)$
 \State Initialize $c(\setv,\setw, m) \leftarrow 0$ for $m$ from $1$ to $\mm_{\max}$
 \EndIf
 \For{$m$ from $0$ to $m_\setu$}
 \Comment update relevant entries
 \State Update $c(\setv,\setw,m) \leftarrow c(\setv,\setw,m) +
      (-1)^{|\setu| - |\setv|} \times 2^{m_{\max} - m_\setu}$
   \EndFor
 \EndFor \EndFor
\end{algorithmic}
\end{algorithm}
\begin{algorithm}[!h]
 \floatname{algorithm}{Pseudocode~3B} 
 \caption{(Implementing the QMC MDM)}
\begin{algorithmic}[1]
 \State Initialize $\calA^{\rm Q}(f) \leftarrow c_\emptyset\times f(\bszero)$
 \For{$\emptyset\ne\setv \in \Uext$}
 \For{$\setw \in \calM(\setv)$}
 \For{$m$ from $0$ to $\mm_{\max}$}
 \If{$c(\setv,\setw,m)\ne 0$}
 \State Calculate $S_{\setv,\setw,m}(f)$ using \eqref{eq:s-qmc1}
 \State Update $\calA^{\rm Q}(f) \leftarrow \calA^{\rm
 Q}(f) + c(\setv,\setw,m)\times S_{\setv,\setw,m}(f) / 2^{m_{\max}}$
 \EndIf \EndFor\EndFor\EndFor \\
 \Return $\calA^{\rm Q}(f)$
\end{algorithmic}%
\end{algorithm}%
\noindent
where for a nonempty set $\setv$, a position $\setw\in\calM(\setv)$, and
$m=0,\ldots,m_{\max}$ we define
\begin{align} \label{eq:c-qmc}
 c(\setv, \setw, m) \,\coloneqq\,
 \sum_{\satop{\setu\in\setU,\,\setu \supseteq \setv}
             {\setu|\setv \,\equiv\, \setw,\, m_\setu \geq m}} (-1)^{|\setu| - |\setv|}\,
  2^{m_{\max} - m_\setu}\,.
\end{align}
Note that we have chosen to multiply and divide by $2^{m_{\max}}$ to
ensure that each $c(\setv, \setw, m)$ is integer valued.

We can compute and store a list of positions $\calM(\setv)$ and the values
$c(\setv, \setw, m)$ when we construct the extended active set $\Uext$ by
extending the active set $\calU$, in a similar way to the Smolyak case in
the previous subsection. This is
presented in Pseudocode~2B. 
The new algorithm is more complicated due to the need to store the
positions $\calM(\setv)$.

The MDM implementation using the formulation
\eqref{eq:s-qmc1}--\eqref{eq:c-qmc} does not require access to any subsets
or supersets. For each nonempty $\setv\subseteq\Uext$ and each position
$\setw\in\calM(\setv)$ and for the different $m$, with $c(\setv,\setw, m)
\ne 0$, the sums $S_{\setv, \setw, m}(f)$ are over disjoint sets of QMC
points. In this way we will only evaluate each function
$f(\cdot_{\setw\to\setv}; \bs0)$ at each quadrature point once. An
implementation of the MDM algorithm with QMC quadratures is given in
Pseudocode~3B.

\section{Two implementations of Smolyak MDM} \label{sec:smolyak}

Here we compare two approaches to implement Smolyak quadrature in the
context of MDM: the direct implementation and the combination technique.

\subsection{Direct Smolyak implementation} \label{sec:smo_direct}

From a practical point of view, it is more useful to write Smolyak's
method as an explicit weighted quadrature rule as opposed to the tensor
product form \eqref{eq:smolyak}, see, e.g., \cite{GG98}. We summarize this
formulation below.

For each one-dimensional rule $U_i$, let $n_i$ denote the number of
quadrature points, $(w_{i, k})_{k = 0}^{n_i - 1}$ the quadrature weights,
and $(t_{i,k})_{k = 0}^{n_i - 1}$ the quadrature nodes. Here for
simplicity of notation we present the formula for a $d$-dimensional rule,
with $d\ge 1$, which would need to be mapped to the set $\setv$
appropriately. 
To this end we write $Q_{d, m} = Q_{\{1, 2, \cdots, d\}, m}$.
The formula depends on whether the quadrature rules are
nested, i.e., whether $U_i$ includes all the quadrature points from
$U_{i-1}$.

\paragraph{Non-nested case}

For non-nested one-dimensional rules, Smolyak's method can be written
explicitly as
\begin{align}
\label{eq:Sm_nonnest}
Q_{d, \mm}(g)
\,=\, \sum_{\satop{\bsi\in\bbN^d}{d\le|\bsi|\leq  d + \mm - 1}}
\sum_{k_1 = 0}^{n_{(i_1)} - 1}\cdots
\sum_{k_d = 0}^{n_{(i_d)} - 1} w_{\bsi, \bsk}\, g(\bst_{\bsi, \bsk})\,,
\end{align}
where the quadrature point $\bst_{\bsi, \bsk} \in D^d$ has coordinates
$(\bst_{\bsi, \bsk})_j = t_{i_j, k_j}$ for $j = 1, 2, \ldots, d$ and
\begin{align}
\label{eq:nonnest_weights}
w_{\bsi, \bsk} \,=\,
\sum_{\satop{\bsp \in \{0, 1\}^d}{d\le|\bsi + \bsp| \leq d + \mm - 1}}
\prod_{j=1}^d \left((-1)^{p_j}\,w_{i_j, k_j}\right) \,.
\end{align}

\paragraph{Nested case}

When the $U_i$ are nested, we assume that the quadrature points and
weights are ordered such that at level $i$ the new points occur at the end
of the point set, from index $n_{i - 1}$ onwards. That is, for all $i \in
\N$ we have $t_{i, k} = t_{i + 1, k}$ for $k = 0, 1, \ldots n_i - 1$.
Then, to ensure that the function is only evaluated at each node once,
\eqref{eq:smolyak} can be rewritten as
\begin{align}
\label{eq:Sm_nest}
Q_{d, \mm}(g)\,=\,\sum_{\satop{\bsi\in\bbN^d}{d\le|\bsi|\leq  d + \mm - 1}}
\sum_{k_1 = n_{(i_1 - 1)}}^{n_{(i_1)} - 1}\cdots
\sum_{k_d = n_{(i_d - 1)}}^{n_{(i_d)} - 1} w_{\bsi, \bsk}\, g(\bst_{\bsi, \bsk}) \, ,
\end{align}
with weights
\begin{align}
\label{eq:nest_weights}
w_{\bsi, \bsk} \,=\,
\sum_{\satop{\bsq\in\bbN^d,\,\bsq \geq \bsi}{d\le |\bsq| \leq d + \mm - 1}}
\prod_{j = 1}^d \left(w_{q_j, k_j} - w_{q_j - 1, k_j}\right)\,,
\end{align}
where we set $w_{0, k} \equiv 0$ for all $k\ge 0$ and $w_{q, k} \equiv 0$
when $k \geq n_q$. In particular, when $q_j = i_j$ in
\eqref{eq:nest_weights} the weight that is subtracted is 0, that is,
$w_{q_j - 1, k_j} = w_{i_j - 1, k_j} = 0$, since in \eqref{eq:Sm_nest}
$k_j \geq n_{i_j - 1}$.


\subsection{Smolyak quadrature via the combination technique}

The combination technique (it combines different straightforward tensor
product rules, hence the name) provides an alternative formulation to
\eqref{eq:smolyak} as follows, see, e.g., \cite{GSZ92, WW95},
\begin{align} \label{eq:comb}
  Q_{d,\mm}(g)
  &\,=\,
  \sum_{\satop{\bsi\in\bbN^{d}}{\max(\mm,d)\le |\bsi|\le d + \mm - 1}}
  (-1)^{d+\mm-1-|\bsi|}
  \binom{d-1}{|\bsi|-\mm}
  \bigg(\bigotimes_{j = 1}^d U_{i_j}\bigg)(g) \nonumber\\
    &\,=\,  \sum_{r = \max(\mm - d + 1, 1)}^{\mm}
  (-1)^{\mm - r} \binom{d -1}{d + r - 1 - \mm}
  \sum_{\satop{\bsi \in \N^{d}}{|\bsi| = d + r - 1}} \bigg( \bigotimes_{j = 1}^d U_{i_j} \bigg)(g)
  \nonumber\\
    &\,=\,  \sum_{r = \max(\mm - d + 1, 1)}^{\mm}
  (-1)^{\mm - r} \binom{d -1}{m - r}
  \sum_{\satop{\bsi \in \N^{d}}{|\bsi| = d + r - 1}} \bigg( \bigotimes_{j = 1}^d U_{i_j} \bigg)(g)
  \,,
\end{align}
where we have simplified using the symmetry of the binomial coefficient:
$\binom{n}{k} = \binom{n}{n - k}$. We adopt the usual convention that
$\binom{0}{0} := 1$.

Using \eqref{eq:comb}, we can now rewrite the MDM algorithm from the
second equality in \eqref{eq:smo1} as
\begin{align} \label{eq:smC}
  \calA^\mathrm{C}(f)
  &\,=\, c_\emptyset\,f(\bszero) +
   \sum_{\emptyset\ne\setv\in\calU_{\rm ext}}
   \sum_{\satop{\setu\in\calU}{\setu\supseteq\setv}}
  (-1)^{|\setu|-|\setv|}
    \nonumber\\
  & \quad\times\sum_{m = \max(\mm_\setu - |\setv| + 1, 1)}^{\mm_\setu}
  (-1)^{\mm_\setu - m} \binom{|\setv| -1}{m_\setu - m}
  \sum_{\satop{\bsi_\setv \in \N^{|\setv|}}{|\bsi_\setv| = |\setv| + m - 1}} \bigg( \bigotimes_{j \in \setv} U_{i_j} \bigg)(f(\cdot_\setv; \bs0)) \nonumber\\
  &\,=\, c_\emptyset\,f(\bszero) + \sum_{\emptyset\ne \setv\in\calU_{\rm ext}}
  \sum_{\satop{m=1}{\widetilde{c}(\setv,m)\ne 0}}^{\mm_{\max}}
  \widetilde{c}(\setv,m)\, \widetilde{Q}_{\setv, m}(f(\cdot_\setv; \bs0))\,,
\end{align}
where for a nonempty set $\setv$ and $m=1,\ldots,m_{\max}$ we define
\begin{align} \label{eq:smC-R}
  \widetilde{Q}_{\setv, m}(g_\setv) \,\coloneqq\,
  \sum_{\satop{\bsi_\setv \in \N^{|\setv|}}{|\bsi_\setv| = |\setv| + m - 1}} \bigg( \bigotimes_{j \in \setv} U_{i_j} \bigg)
  (g_\setv)\,,
\end{align}
and
\begin{align} \label{eq:c-comb}
  \widetilde{c}(\setv,m) \,\coloneqq\,
  \sum_{\satop{\setu\in\calU,\,\setu\supseteq\setv}{\mm_\setu - |\setv| + 1 \le m\le \mm_\setu}}
  (-1)^{|\setu|-|\setv| + \mm_\setu - m} \binom{|\setv| -1}{m_\setu - m}\,.
\end{align}

The formulation \eqref{eq:smC}--\eqref{eq:c-comb} is very similar to the
formulation \eqref{eq:smo1}--\eqref{eq:c-smol}, and the computation of the
values $\widetilde{c}(\setv,m)$ can also be done while constructing the
extended active set. This is shown in
Pseudocode~2A$^\prime$, 
which works in a similar way to Pseudocode~2A. 
The key change is that Step~12 of Pseudocode~2A 
is replaced by Steps 12--14 of Pseudocode~2A$^\prime$. 

The quantity $\widetilde{Q}_{\setv, m}(g_\setv)$ is a straightforward
tensor product quadrature rule
\begin{align} \label{eq:prod}
  \widetilde{Q}_{d, m}(g) \,\coloneqq\,
  \sum_{\satop{\bsi \in \N^d}{|\bsi| = d + m - 1}}
  \sum_{k_1=0}^{n_{(i_1)}-1} \cdots \sum_{k_d=0}^{n_{(i_d)}-1}
  w_{\bsi,\bsk}\,g(\bst_{\bsi,\bsk})\,,
\end{align}
with $w_{\bsi,\bsk} = \prod_{j=1}^d w_{i_j,k_j}$ and $(\bst_{\bsi,\bsk})_j
= t_{i_j,k_j}$ for $j=1,\ldots,d$. So the implementation of the Smolyak
MDM algorithm using the combination technique can be obtained analogously
by modifying Pseudocode~3A, 
shown in Pseudocode~3A$^\prime$. 
The essential changes are in Steps~5 and~6.
\clearpage

\begin{algorithm}[!t]
 \floatname{algorithm}{Pseudocode~2A$^\prime$} 
 \caption{(The extended active set for Smolyak with combination technique)}
 \begin{algorithmic}[1]
 \State Initialize $\Uext \leftarrow \calU$
 \Comment start from the active set
 \State Initialize $c_\emptyset \leftarrow 1$
 \Comment for $\setu=\emptyset$
 \For{$\emptyset\ne\setu \in \calU$ with $|\setu|$ from $1$ to $\supdim$}
 \Comment traverse in increasing cardinality
 \State Calculate $m_\setu$
 \Comment formula for $m_\setu$ is given from theory
 \State Update $c_\emptyset \leftarrow c_\emptyset  + (-1)^{|\setu|}$
 \State Initialize $\widetilde{c}(\setu, m) \leftarrow 0$ for $m$ from $1$ to $\mm_{\max}$
 \For{$\emptyset \ne \setv \subseteq \setu$}
 \Comment generate nonempty subsets
 \If{$\setv \not \in \Uext$}
 \Comment look up and add missing subset
 \State Add $\setv$ to $\Uext$
 \State Initialize $\widetilde{c}(\setv, m) \leftarrow 0$ for $m$ from $1$ to $\mm_{\max}$
 \EndIf
 \For{$m$ from $m_\setu - |\setv|+1$ to $m_\setu$}
 \Comment update relevant entries 
 \State Update $\widetilde{c}(\setv,m) \leftarrow \widetilde{c}(\setv,m) +
 (-1)^{|\setu|-|\setv| + \mm_\setu - m} \binom{|\setv| -1}{m_\setu - m}$
 \EndFor
 \EndFor \EndFor
\end{algorithmic}
\end{algorithm}
\begin{algorithm}[!h]
 \floatname{algorithm}{Pseudocode~3A$^\prime$} 
 \caption{(Implementing the Smolyak MDM with combination technique)}
 \begin{algorithmic}[1]
 \State Initialize $\calA^{\rm C}(f) \leftarrow c_\emptyset\times f(\bszero)$
 \For{$\emptyset\ne\setv \in \Uext$}
 \For{$m$ from $1$ to $\mm_{\max}$}
 \If{$\widetilde{c}(\setv,m)\ne 0$}
 \State Calculate $\widetilde{Q}_{\setv,m}(f(\cdot_\setv;\bszero))$ using \eqref{eq:prod}
 \State Update $\calA^{\rm C}(f) \leftarrow \calA^{\rm C}(f)
 + \widetilde{c}(\setv,m)\times \widetilde{Q}_{\setv,m}(f(\cdot_\setv;\bszero))$
 \EndIf \EndFor\EndFor \\
 \Return $\calA^{\rm C}(f)$
\end{algorithmic}
\end{algorithm}

\subsection{Direct Smolyak vs combination technique}

Here we compare the computational cost between the direct Smolyak
implementation $\calA^\mathrm{S}$ given by
\eqref{eq:smo1}--\eqref{eq:c-smol} and the combination technique
implementation $\calA^\mathrm{C}$ given by
\eqref{eq:smC}--\eqref{eq:c-comb}. Throughout, we will use the notation
$\cost(\cdot)$ to denote the whole cost, $\#(\cdot)$ to denote the number
of function evaluations, and $\$(|\setv|)$ to denote the cost of
evaluating the original integrand $f$ at some anchored point $(\bsx_\setv;
\bs0)$.

The cost of the direct Smolyak implementation is
\begin{align*}
\cost(\calA^\mathrm{S}) \,&=\,
\sum_{\emptyset\ne\setv \in \Uext} \sum_{\satop{m=1}{c(\setv,m)\ne 0}}^{m_{\max}}
\cost \Big( Q_{\setv, m}(f(\cdot_\setv; \bs0)) \Big)\,.
\end{align*}
Similarly for the combination technique the cost is
\begin{align*}
\cost(\calA^\mathrm{C}) \,&=\,
\sum_{\emptyset\ne\setv \in \Uext} \sum_{\satop{m=1}{\widetilde{c}(\setv,m)\ne 0}}^{m_{\max}}
\cost \Big( \widetilde{Q}_{\setv, m}(f(\cdot_\setv; \bs0)) \Big)\,.
\end{align*}
We expect that $\widetilde{c}(\setv,m)$ will be nonzero more often than
$c(\setv,m)$ would be nonzero, so that the combination technique
approach needs to compute more quadrature approximations. We need to
account for both the cost to construct the quadrature weights and the cost
of function evaluations.

In terms of the number of function evaluations, we have for the direct
Smolyak implementation
\begin{align}\label{eq:smolyak_nb_funevals}
\#(Q_{d, m})
\,=\, \begin{cases}
\sum_{|\bsi| \leq d + m - 1} \prod_{j=1}^d n_{i_j}
&\text{if non-nested,}\\[2mm]
\sum_{|\bsi| \leq d + m -1} \prod_{j=1}^d (n_{i_j} - n_{i_j - 1})
& \text{if nested.}
\end{cases}
\end{align}
For the combination technique, we have
\begin{align*}
 \#(\widetilde{Q}_{d, m})
 &\,=\, \textstyle \sum_{|\bsi| = d + m - 1} \prod_{j=1}^d  n_{i_j} \\
 &\,=\, \textstyle
 \sum_{|\bsi| = d + m - 1} \prod_{j=1}^d
 \left(\sum_{p_j = 1}^{i_j} n_{p_j} - n_{p_j - 1}\right) \\
 &\,=\, \textstyle  \sum_{|\bsi| = d + m - 1}
 \sum_{\bsp \in \N^{d},\,\bsp \leq \bsi} \prod_{j=1}^d  (n_{p_j} - n_{p_j - 1})\\
 &\,=\, \textstyle
 \sum_{|\bsp| \leq d + m - 1} \binom{2d+m-|\bsp|-2}{d-1} \prod_{j=1}^d (n_{p_j} - n_{p_j - 1}).
\end{align*}
So $\#(Q_{d, m}^{\rm nested}) \le \#(\widetilde{Q}_{d, m}) \le
\#(Q_{d, m}^{\rm non\mbox{-}nested})$.

For the direct Smolyak implementation we note that \eqref{eq:Sm_nonnest}
(or \eqref{eq:Sm_nest} in the nested case) using the weights
\eqref{eq:nonnest_weights} (respectively \eqref{eq:nest_weights}) is
simply a grouping of all of the quadrature points, but the total
collection of one-dimensional weights that need to be evaluated is the
same as in \eqref{eq:smolyak}; so the cost of computing the
weights is $\sum_{|\bsi| \leq d + m - 1} d \prod_{j=1}^d (n_{i_j} + n_{i_j
- 1})$. On the other hand, the cost of computing the weights in
\eqref{eq:smC-R} is clearly $\sum_{|\bsi| = |\setv| + m - 1} |\setv|
\prod_{j \in \setv} n_{i_j}$.

Thus for the direct Smolyak implementation we have
\begin{align*}
 &\cost(Q_{\setv, m}) \,=\,\\
 &\;\;
 \begin{cases}
 \sum_{|\bsi| \leq |\setv| + m - 1} \left( |\setv| \prod_{j \in \setv} (n_{i_j} + n_{i_j - 1})
 + \$(|\setv|) \prod_{j \in \setv} n_{i_j}\right) & \mbox{if non-nested}, \\
 \sum_{|\bsi| \leq |\setv| + m - 1} \left( |\setv| \prod_{j \in \setv} (n_{i_j} + n_{i_j - 1})
 + \$(|\setv|) \prod_{j \in \setv} (n_{i_j} - n_{i_j - 1})\right) & \mbox{if nested},
 \end{cases}
\end{align*}
and for the combination technique
\begin{align*}
 \textstyle\cost(\widetilde{Q}_{\setv, m})
 \,=\, \sum_{|\bsi| = |\setv| + m - 1}\left( |\setv| \prod_{j \in \setv} n_{i_j}
 + \$(|\setv|) \prod_{j \in \setv} n_{i_j}\right)\,.
\end{align*}

To summarize, the combination technique implementation is likely to
require more quadrature approximations than the direct method (more
nonzero $\widetilde{c}(\setv,m)$ than $c(\setv,m)$), and it uses more
function evaluations than the direct method in the nested case, but the
cost of computing the weights is cheaper. It is not immediately clear
which method would be the overall winner.

\section{MDM with randomized QMC} \label{sec:RQMC}

A randomly shifted version of the QMC approximation \eqref{eq:generic_qmc}
takes the form (see, e.g., \cite{DKS13})
\begin{align*}
 \frac{1}{n} \sum_{i = 0}^{n - 1} g\left(\left\{\bst^{(i)} + \bsDelta\right\}\right),
\end{align*}
where $\bsDelta\in [0,1)^d$ is the random shift with independent and
uniformly distributed components $\Delta_j \in [0,1)$ for $j=1,\ldots,d$,
and the braces around a vector indicate that we take the fractional part
of each component in the vector. The advantage of a randomly shifted QMC
method is that it provides an unbiased estimate of the integral, and
moreover, one can obtain a practical error estimate by using a number of
independent random shifts.

In this section we outline how to implement randomized QMC (RQMC) versions
of the QMC MDM from Section~\ref{sec:qmc} by random shifting. One approach
that comes to mind is to use a completely different set of independent
shifts for each $A_\setu(f(\cdot_\setv; \bs0))$ in \eqref{eq:A-qmc}. But
with this approach none of the function evaluations can be reused. Another
approach would be to use the same set of independent shifts for those
$A_\setu(f(\cdot_\setv; \bs0))$ with the same cardinality $|\setu|$, in a
similar way to how the QMC method is applied to the MDM. But this approach
also conflicts with our strategy to reuse function values based on the
position where a subset $\setv$ originates from $\setu$. Furthermore, it
is unclear in this case how to obtain a valid error estimate of the
overall MDM algorithm because of the dependence between many shifts. So,
instead of these two approaches, we will describe a third approach which
allows for a valid error estimate and the 
reuse of function values as in the case of the non-randomized QMC MDM.

Instead, our approach is to randomize the MDM algorithm itself, 
by treating it as an algorithm that acts on a function of $\tau^*$ variables,
and then independently shifting each of the $\tau^*$ variables
(recall from Lemma~\ref{lem:dim} that $\tau^*$ is the truncation dimension, i.e., the
largest index of the variables that appear in the active set). 
We generate $r$ independent random shifts 
$\bsDelta^{(1)}, \ldots,\bsDelta^{(r)} \in
[0,1)^{\tau^*}$, and we take
%
\begin{align} \label{eq:rqmc0}
  \calA^{\rm R}(f) \,=\, \frac{1}{r} \sum_{q=1}^r \calA_q(f)\,,
\end{align}
where each $\calA_q(f)$ is a shifted QMC MDM algorithm analogous to
\eqref{eq:s-qmc1}--\eqref{eq:c-qmc},
\begin{align}
\label{eq:rqmc}
 \calA_q(f) \,:=\, c_\emptyset\,f(\bszero) +
 \sum_{\emptyset\ne\setv \in \Uext}
 \sum_{\setw \in \calM(\setv)}
 \sum_{\satop{m=0}{c(\setv, \setw, m)\ne 0}}^{m_{\max}}
 c(\setv, \setw, m)\,
 \frac{\widetilde{S}_{\setv,\setw,m}^{(q)}(f)}{2^{m_{\max}}}\,,
\end{align}
but now with function values shifted by $\bsDelta^{(q)}$ in
\begin{align} \label{eq:s-rqmc}
 \widetilde{S}_{\setv,\setw,m}^{(q)}(f)
 \,:=\, \sum_{i = \lfloor 2^{m-1} \rfloor}^{2^m - 1} f\left(
 \left\{\bst^{(i)}_{\setw\to\setv} + \bsDelta^{(q)}_\setv\right\}; \bszero\right).
\end{align}

In this way, for each independent shift $\bsDelta^{(q)}$
we can perform the MDM approximation $\calA_q(f)$
using the reformulation described in Section~\ref{sec:qmc}.
Note that the reuse of function values is only possible inside the 
approximation for a single shift,
reuse across different shifts will never be possible because the
shifted approximations must be independent. 
As such, the cost of our RQMC MDM using $r$ shifts is $r$ times
the cost of the deterministic QMC MDM from Section~\ref{sec:qmc}.

\begin{algorithm}[!b]
 \floatname{algorithm}{Pseudocode~3B$'$} 
 \caption{(Implementing the RQMC MDM)}
\begin{algorithmic}[1]
 \State Initialize $\calA^{\rm R}(f) \leftarrow 0$
 \State Initialize $E \leftarrow 0$
 \For{$q$ from $1$ to $r$}
 \Comment compute MDM for each shift
 \State Generate $\bsDelta^{(q)}$ independently and uniformly from $[0,1)^{\tau*}$
 \State Initialize $\calA_q(f) \leftarrow c_\emptyset\times f(\bszero)$
 \For{$\emptyset\ne\setv \in \Uext$}
 \For{$\setw \in \calM(\setv)$}
 \For{$m$ from $0$ to $\mm_{\max}$}
 \If{$c(\setv,\setw,m)\ne 0$}
    \State Calculate $\widetilde{S}^{(q)}_{\setv,\setw,m}(f)$ using
      \eqref{eq:s-rqmc}
    \State Update $\calA_q(f) \leftarrow \calA_q(f)
       + c(\setv,\setw,m)\times \widetilde{S}^{(q)}_{\setv,\setw,m}(f) / 2^{m_{\max}}$
  \EndIf \EndFor\EndFor\EndFor
 \State Update $\calA^{\rm R}(f) \leftarrow \calA^{\rm R}(f)+ \calA_q(f)$
 \Comment sum up from different shifts
 \State Update $E \leftarrow E + \left(\calA_q(f)\right)^2$
 \Comment estimate error from different shifts
 \EndFor
 \State Compute $\calA^{\rm R}(f) \leftarrow \calA^{\rm R}(f)/r$
 \Comment final MDM approximation
 \State Compute $E \leftarrow \sqrt{[E - r \times (\calA^{\rm R}(f))^2]/[r(r-1)]}$
 \Comment final error estimate \\
 \Return $\calA^{\rm R}(f)$ and $E$
\end{algorithmic}
\end{algorithm}

Since $\calA_1(f),\ldots,\calA_r(f)$ are independent random variables each
with the same mean $\sum_{\setu\in\calU} I_\setu(f_\setu)$, their average
$\calA^{\rm R}(f)$ also has the same mean. Moreover, the variance of
$\calA^{\rm R}(f)$ is $1/r$ times the variance of each $\calA_q(f)$. The
quadrature component of the root-mean-square error, i.e., $\sqrt{\bbE\,
|\sum_{\setu\in\calU} I_\setu(f_\setu) - \calA^{\rm R}(f)|^2}$, can be
estimated from these $r$ sample values by
\begin{equation*}
 \sqrt{\frac{1}{r(r-1)} \sum_{q=1}^r \left(\calA^{\rm R}(f) - \calA_q(f)\right)^2}
 \,=\, \sqrt{\frac{1}{r(r-1)} \left(\sum_{q=1}^r \left(\calA_q(f)\right)^2 - r\left(\calA^{\rm R}(f)\right)^2\right)}.
\end{equation*}

The computation of $\calM(\setv)$ and $c(\setv, \setw, m)$ is exactly the
same as in Pseudocode~2B. We only need to replace Pseudocode~3B by
Pseudocode~3B$'$, where we include also the error estimation.

Note that due to linearity in \eqref{eq:rqmc0}--\eqref{eq:rqmc}, we can
also interpret $\calA^\mathrm{R}(f)$ as one MDM algorithm where each of
the quadrature approximations $\calA_\setu$ is obtained by the average of
$r$ randomly shifted QMC rules. In this case we can also estimate the
root-mean-square error corresponding to each~$\setu$. However, note that
the shifts between different sets~$\setu$ are correlated and an
estimate for the total variance cannot be obtained by simply summing up
the individual variance estimates.

\section{Computing the tolerance $T$} \label{sec:T}

Under the theoretical setting of the paper~\cite{KNPSW17}, the
threshold parameter $T$ to construct an active set \eqref{eq:active} takes
the form (see Formula~(26) of \cite{KNPSW17})
\begin{align}
\label{eq:T}
 T \,=\,
\left(\frac{\e/2}{\sum_{\setu\subset\bbN} [w(\setu)]^{1/\alpha}}\right)^
{\alpha/(\alpha - 1)}\,,
\end{align}
where $\e>0$ is a given error tolerance parameter, while $\alpha>1$ is a
free parameter with the constraint that $\sum_{\setu\subset\bbN}
[w(\setu)]^{1/\alpha} < \infty$.
Since the active set constructed by Pseudocode~1 is uniquely defined
by $T$ and $w(\setu)$, computing the tolerance is another important step in
implementing the MDM. In this section we outline how to estimate $T$ by 
presenting novel estimates on the sum in \eqref{eq:T}.

In a practical implementation, we need to estimate the infinite sum in the
denominator \emph{from above}, to yield an underestimate of $T$, so that
the required theoretical error tolerance $\e$ is guaranteed, at the
expense of enlarging the active set. Understandably, we should aim for a
tight estimate of the infinite sum to avoid making the active set too
large and thus the algorithm too expensive. Similarly, the free parameter
$\alpha$ should be chosen so as to make the threshold parameter $T$ as
large as possible.

Here we derive upper bounds on the infinite sum by assuming further
structure in $w(\setu)$, namely, that the POD form of $w(\setu)$, see
\eqref{eq:POD}, is further parametrized for $\ell\ge 0$ and $j\ge 1$ by
\begin{equation}\label{eq:omegas_sum}
\Omega_{\ell} \,=\, c_1(\ell!)^{b_1}
\quad \text{and} \quad
\omega_j = c_2j^{-b_2},
\end{equation}
with $b_1\ge 0$, $b_2>1$, $b_2>b_1$, and $c_1,c_2>0$. Thus
\begin{align} \label{eq:headache}
  \sum_{\setu\subset\bbN} [w(\setu)]^{1/\alpha}
  &\,=\, \sum_{\ell=0}^\infty \sum_{|\setu|=\ell}
  \bigg(c_1 (\ell!)^{b_1}
  \prod_{j\in\setu} (c_2 j^{-b_2}) \bigg)^{1/\alpha} 
  \,=\, c_1^{1/\alpha}
  \sum_{\ell=0}^\infty (\ell!)^a\, c^\ell
  \sum_{|\setu|=\ell} \prod_{j\in\setu} j^{-b},
\end{align}
with
\begin{equation}\label{eq:symbols}
  a\,=\,\frac{b_1}\alpha,\quad b\,=\,\frac{b_2}\alpha,\quad
  \mbox{and}\quad c\,=\, c_2^{1/\alpha}.
\end{equation}

We know from \cite[Lemma~10]{KNPSW17} that the infinite sum in
\eqref{eq:headache} is finite if $b>1$ and $b>a$. In turn this means that
the free parameter $\alpha>1$ in \eqref{eq:T} should satisfy $\alpha <
b_2$.

In the next two lemmas we obtain an upper bound on the infinite sum in
\eqref{eq:headache} for somewhat general parameters $a,b,c$, without
taking into account the constraints in how they relate to each other or
how they relate to $\alpha$. After the two lemmas we will discuss when and
how the lemmas can be applied in our case.

\begin{lemma} \label{lem:sum-prod}
For any $b>1$ and $\ell\ge 1$ we have
\begin{equation}\label{eq:finito}
  \sum_{|\setu|=\ell}\prod_{j\in\setu}j^{-b}
  \,\le\,
  \frac{z^{\ell-1}}{(\ell-1)!} \left(1+\frac{z}{\ell}\right),
  \qquad
  z\,\coloneqq\,\frac{(2/3)^{b-1}}{b-1}.
\end{equation}
\end{lemma}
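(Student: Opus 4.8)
The quantity on the left-hand side is the degree-$\ell$ elementary symmetric function $e_\ell$ of the sequence $(j^{-b})_{j\ge 1}$. The plan is to bound $e_\ell$ by carefully controlling a power-sum expansion. Concretely, I would use the fact that
\[
  \sum_{|\setu|=\ell}\prod_{j\in\setu}j^{-b}
  \,=\, \sum_{1\le j_1<j_2<\cdots<j_\ell} \prod_{k=1}^\ell j_k^{-b}
  \,\le\, \frac{1}{\ell!}\Big(\sum_{j\ge 1} j^{-b}\Big)^\ell
\]
only as a warm-up; this crude bound gives $\zeta(b)^\ell/\ell!$, which is not tight enough because $\zeta(b)$ can be large when $b$ is close to $1$. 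Instead I would split off the term $j_1=1$: write $e_\ell = e_\ell' + e_{\ell-1}'$, where $e_\ell'$ and $e_{\ell-1}'$ are the elementary symmetric sums over $j\ge 2$ of orders $\ell$ and $\ell-1$ respectively (the first when $1\notin\setu$, the second when $1\in\setu$). Then bound each of these by the ordered-product-to-unordered-product inequality restricted to $j\ge 2$:
\[
  e_k' \,\le\, \frac{1}{k!}\Big(\sum_{j\ge 2} j^{-b}\Big)^k.
\]

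The key estimate is then $\sum_{j\ge 2} j^{-b} \le \int_1^\infty t^{-b}\,\rd t = \frac{1}{b-1}$, but that still is not quite sharp enough to produce the constant $(2/3)^{b-1}$. I would sharpen it by the standard integral-comparison trick: $\sum_{j\ge 2} j^{-b} = \sum_{j\ge 2}\int_{j-1/2}^{j+1/2}\!\big(\text{something}\big)$ — more precisely, since $t\mapsto t^{-b}$ is convex, $j^{-b}\le \int_{j-1/2}^{j+1/2} t^{-b}\,\rd t$, so $\sum_{j\ge 2}j^{-b}\le \int_{3/2}^\infty t^{-b}\,\rd t = \frac{(3/2)^{1-b}}{b-1} = \frac{(2/3)^{b-1}}{b-1} = z$. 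This is exactly where the $(2/3)^{b-1}$ comes from, and convexity of $t^{-b}$ for $b>1$ is what makes the midpoint bound valid. Assembling the pieces:
\[
  \sum_{|\setu|=\ell}\prod_{j\in\setu}j^{-b}
  \,=\, e_\ell' + e_{\ell-1}'
  \,\le\, \frac{z^\ell}{\ell!} + \frac{z^{\ell-1}}{(\ell-1)!}
  \,=\, \frac{z^{\ell-1}}{(\ell-1)!}\Big(\frac{z}{\ell}+1\Big),
\]
which is the claimed bound.

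The main obstacle I anticipate is getting the constant in $\sum_{j\ge2}j^{-b}\le z$ exactly right — the naive integral bound $\int_1^\infty$ overshoots, and one has to notice that starting the integral at $3/2$ (justified by convexity/midpoint comparison) yields precisely $(2/3)^{b-1}/(b-1)$; this is the one genuinely clever step. Everything else — the split according to whether $1\in\setu$, and the symmetric-function-to-power-sum inequality $e_k(a_1,a_2,\ldots)\le \frac{1}{k!}(\sum_i a_i)^k$ for nonnegative $a_i$ — is routine, though one should double-check the edge case $\ell=1$, where $e_0'=1$ and the bound reads $\sum_{j\ge 1}j^{-b}\le 1+z$, i.e.\ $\zeta(b)\le 1+z$, which indeed follows from $\zeta(b)=1+\sum_{j\ge2}j^{-b}\le 1+z$.
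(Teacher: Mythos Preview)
Your proof is correct, and it is cleaner than the paper's. Both arguments share the same two structural ideas: (i) split according to whether $1\in\setu$, and (ii) use the midpoint/convexity estimate $\sum_{j\ge 2} j^{-b}\le \int_{3/2}^\infty t^{-b}\,\rd t = z$. Where they differ is in how the ``everything $\ge 2$'' part is handled. The paper writes the sum as a nested chain $\sum_{j_1}\sum_{j_2>j_1}\cdots\sum_{j_\ell>j_{\ell-1}}$ and peels off the inner sums one at a time with the integral bound, telescoping the exponents so that after $\ell-1$ steps the inner sums contribute $\frac{(j_1+1/2)^{-(\ell-1)(b-1)}}{(\ell-1)!\,(b-1)^{\ell-1}}$; only then does it split $j_1=1$ versus $j_1\ge 2$. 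You instead invoke the single inequality $e_k'\le \frac{1}{k!}\big(\sum_{j\ge 2} j^{-b}\big)^k$ for elementary symmetric functions of nonnegative reals, apply the midpoint bound once, and are done. Your route is shorter and avoids the somewhat delicate bookkeeping of exponents in the iterated argument; the paper's route has the minor virtue of showing how the factor $(2/3)^{(\ell-1)(b-1)}$ arises even before one takes the final $j_1$-sum, but this is not needed for the lemma as stated. Both give exactly $\frac{z^{\ell-1}}{(\ell-1)!}+\frac{z^\ell}{\ell!}$.
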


\begin{proof}
By identifying $\setu$ with the vector $(j_1,j_2,\ldots,j_\ell)$ with
ordered elements $j_1<j_2<\cdots< j_\ell$, we see that
\begin{equation}\label{eq:iniziare}
  \sum_{|\setu|=\ell}\prod_{j\in\setu}j^{-b}
  \,=\,
  \sum_{j_1=1}^\infty j_1^{-b}
  \sum_{j_2= j_1+1}^\infty j_2^{-b} \cdots
  \sum_{j_{\ell-1}= j_{\ell-2}+1}^\infty j_{\ell-1}^{-b}
  \sum_{j_\ell= j_{\ell-1}+1}^\infty j_\ell^{-b}.
\end{equation}

For any $k\in\bbN$ and $p>1$ we have
\[
  \sum_{j=k+1}^\infty j^{-p}
 \le \int_{k+1/2}^\infty x^{-p}\,\rd x  \,=\, \frac{(k+1/2)^{-(p-1)}}{p-1}
 \,<\, \frac{k^{-(p-1)}}{p-1},
\]
which holds since the mid-point rule underestimates this integral.
Therefore, the very last sum in \eqref{eq:iniziare} is bounded by
\[
   \int_{j_{\ell-1}+1/2}^\infty x^{-b}\,\rd x
   \,=\, \frac{(j_{\ell-1}+1/2)^{-(b-1)}}{b-1}
   \,<\, \frac{j_{\ell-1}^{-(b-1)}}{b-1},
\]
and in turn the last two sums in \eqref{eq:iniziare} are bounded by
\[
  \frac1{b-1}\int_{j_{\ell-2}+1/2}^\infty x^{-(2b-1)} \,\rd x
  \,=\, \frac{(j_{\ell-2}+1/2)^{-2(b-1)}}{2(b-1)^2}
  \,<\, \frac{j_{\ell-2}^{-2(b-1)}}{2(b-1)^2}.
\]
Similarly, the last $\ell-1$ sums are bounded by
\[
  \frac{(j_1+1/2)^{-(\ell-1)(b-1)}}{(\ell-1)!\,(b-1)^{\ell-1}}.
\]
The sum with respect to $j_1$ has to be treated differently since we do
not want to integrate over $[1/2,\infty)$. For that purpose, we treat
differently $j_1=1$ and $j_1\ge2$, to obtain
\begin{align*}
  \sum_{|\setu|=\ell}\prod_{j\in\setu}j^{-b}
  &\,\le\, \frac1{(\ell-1)!\, (b-1)^{\ell-1}}
      \sum_{j_1=1}^\infty j_1^{-b} (j_1+1/2)^{-(\ell-1)(b-1)}\\
  &\,<\, \frac{(2/3)^{(\ell-1)(b-1)}} {(\ell-1)!(b-1)^{\ell-1}}
   + \frac1{(\ell-1)!\,(b-1)^{\ell-1}}
      \sum_{j_1=2}^\infty j_1^{-\ell\, b +(\ell-1)}.
\end{align*}
Applying the integration estimate again to the sum over
$j_1\ge2$, we get that
\[
  \frac1{(\ell-1)! (b-1)^{\ell-1}}
      \sum_{j_1=2}^\infty j_1^{-\ell\,b +(\ell-1)}\le
   \frac1{\ell!(b-1)^\ell} \left(\frac23\right)^{\ell(b-1)}.
\]
Combining the last two steps yields the estimate in \eqref{eq:finito}.
\end{proof}

\begin{lemma} \label{lem:sum}
For every $a\in(0,1)$, $b>1$, $c>0$, $s\in\bbN$ and $t\in(0,1)$, we have
\[
 \sum_{\ell=0}^\infty (\ell!)^a\,c^\ell\,
 \sum_{|\setu|=\ell}\prod_{j\in\setu}j^{-b}
 \,\le\, 1+\sum_{\ell=1}^s (\ell!)^a\,c^\ell\,
 \frac{z^{\ell-1}}{(\ell-1)!}\left(1+\frac{z}{\ell}\right) + E_{s,t},
\]
with $z \coloneqq (2/3)^{b-1}/(b-1)$ as in \eqref{eq:finito}, and
\begin{align} \label{eq:Edt}
  E_{s,t} &\,\coloneqq\, c \bigg(1+\frac{z}{s+1}\bigg)\,
  \bigg[
 \frac{t^{s/a}}{1-t^{1/a}}\bigg(s+\frac{1}{1-t^{1/a}}\bigg)\bigg]^a \nonumber\\
 &\qquad\qquad\times
 \bigg[\exp\bigg(\left(\frac{c\,z}t\right)^{1/(1-a)}\bigg)\,
 \min\left(1, \left(\frac{c\,z}t\right)^{s/(1-a)}\right)\,\frac1{s!}\bigg]^{1-a}.
\end{align}
\end{lemma}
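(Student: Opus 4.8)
The plan is to split the infinite sum $\sum_{\ell=0}^\infty (\ell!)^a c^\ell \sum_{|\setu|=\ell}\prod_{j\in\setu}j^{-b}$ at the index $\ell = s$, handling the head $\ell = 0, 1, \ldots, s$ exactly via the bound of Lemma~\ref{lem:sum-prod} (with the $\ell=0$ term contributing $1$), and estimating the tail $\sum_{\ell > s}$ by the quantity $E_{s,t}$. So the entire content of the lemma is in bounding the tail
\[
  \sum_{\ell=s+1}^\infty (\ell!)^a\,c^\ell\,\frac{z^{\ell-1}}{(\ell-1)!}\left(1+\frac{z}{\ell}\right)
  \,\le\, E_{s,t},
\]
after first applying Lemma~\ref{lem:sum-prod} term-by-term in the tail as well.

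For the tail, the first move is to bound the slowly-varying factor $c(1 + z/\ell) \le c(1 + z/(s+1))$ and pull it out, so it remains to estimate $\sum_{\ell > s} (\ell!)^a z^{\ell-1} c^{\ell-1}/(\ell-1)!$. Reindexing $k = \ell - 1$, write $(\ell!)^a = ((k+1)!)^a$ and split the general term using Hölder's inequality with exponents $1/a$ and $1/(1-a)$: writing $((k+1)!)^a (cz)^k / k! = \big[ ((k+1)!) \, (cz/t)^{k} \big]^{a} \cdot \big[ t^{k/(1-a)}\, ?\big]^{?}$ — the precise algebraic split is the delicate part. The natural split is to apply Hölder to the sum $\sum_{k \ge s} \big(((k+1)!) t^{-k/a}\big)^a \big( (cz)^{k/(1-a)} t^{k/(1-a)} / (k!)^{1/(1-a)}\big)^{1-a}$... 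I would instead write $((k+1)!)^a (cz)^k/k! = A_k^a B_k^{1-a}$ where $A_k = (k+1)!\, t^{-k/a}$ and $B_k = (cz/t)^{k/(1-a)}/(k!)^{1/(1-a)}$, check that $A_k^a B_k^{1-a}$ indeed recovers the term (it does, since $(t^{-k/a})^a (t^{k/(1-a)})^{1-a}$... no — one must be careful with signs of the $t$-exponents), and then Hölder gives
\[
  \sum_{k=s}^\infty A_k^a B_k^{1-a} \,\le\, \Big(\sum_{k=s}^\infty A_k\Big)^a \Big(\sum_{k=s}^\infty B_k\Big)^{1-a}.
\]

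The two resulting sums are then estimated separately. For $\sum_{k \ge s} A_k = \sum_{k\ge s}(k+1)!\, t^{-k/a}$ — wait, this diverges, so the $t$-exponent must carry the convergence, i.e.\ we need $t^{+k/a}$ not $t^{-k/a}$; the correct choice makes $A_k = (k+1)! \, t^{k/a}$ modulo the factorial growth being beaten — but $(k+1)!$ grows faster than any geometric, so actually the factorial must sit in the $B$-factor paired against $1/(k!)^{1/(1-a)}$ which, since $1/(1-a) > 1$, decays fast enough. So the right split is $A_k = $ (something geometric-ish summing to the bracket $\frac{t^{s/a}}{1-t^{1/a}}(s + \frac1{1-t^{1/a}})$, which looks like $\sum_{k\ge s} (k+1) t^{k/a}$ evaluated via $\sum_{k\ge s} t^{k/a} = t^{s/a}/(1-t^{1/a})$ and $\sum_{k \ge s}(k+1-s) t^{k/a}$-type tail), and $B_k = (cz/t)^{k/(1-a)} / (k!)^{1/(1-a)}$ with $\sum_{k \ge s} B_k$ bounded by splitting off $\exp((cz/t)^{1/(1-a)})$ from $\sum_k x^k/k!$ (with $x = (cz/t)^{1/(1-a)}$ absorbed suitably) times the $\min(1,\ldots)$ and $1/s!$ factors from the first omitted term. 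Matching these two sums against the two bracketed factors in \eqref{eq:Edt}, raised to powers $a$ and $1-a$ respectively, and reinstating the pulled-out $c(1+z/(s+1))$, completes the proof.

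\textbf{The main obstacle} I anticipate is getting the Hölder split exactly right so that the two pieces match the somewhat baroque closed forms in \eqref{eq:Edt} — in particular pinning down how the $(k+1)!$ versus $1/k!$ factorials distribute across the two Hölder factors, and verifying the elementary but fiddly bounds $\sum_{k\ge s}(k+1)t^{k/a} \le t^{s/a}(s + 1/(1-t^{1/a}))/(1-t^{1/a})$ and $\sum_{k \ge s} x^k/k! \le e^x \min(1, x^s)/s!$. Everything else — the term-by-term application of Lemma~\ref{lem:sum-prod}, pulling out the monotone factor, and the final bookkeeping — is routine.
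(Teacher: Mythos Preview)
Your approach is exactly the paper's: apply Lemma~\ref{lem:sum-prod} termwise, split at $\ell=s$, pull out $c(1+z/(s+1))$, and use H\"older with exponents $1/a$ and $1/(1-a)$; the clean algebraic identity that resolves your ``delicate part'' is simply $(\ell!)^a/(\ell-1)! = \ell^a\,[(\ell-1)!]^{-(1-a)}$, which immediately gives the split $A_{\ell}=\ell\,t^{(\ell-1)/a}$ and $B_\ell=(cz/t)^{(\ell-1)/(1-a)}/(\ell-1)!$. One correction: your second ``fiddly bound'' should read $\sum_{k\ge s} x^k/k!\le e^x\min(1,x^s/s!)$ (Lagrange remainder), not $e^x\min(1,x^s)/s!$ --- the latter fails for large $x$ --- and indeed this is the form the paper's proof actually derives.
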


\begin{proof}
The result is obtained by applying Lemma~\ref{lem:sum-prod} and then
estimating the tail sum from $\ell = s+1$ by $E_{s,t}$. Indeed, we have
\begin{align*}
 &\sum_{\ell=s+1}^\infty(\ell!)^a\,c^\ell\,
 \frac{z^{\ell-1}}{(\ell-1)!}\left(1+\frac{z}{\ell}\right) \\
 &\,\le\, c\left(1+\frac{z}{s+1}\right)\,
 \sum_{\ell=s+1}^\infty \ell^a\,t^{\ell-1}\,\left(\frac{c\,z}{t}\right)^{\ell-1}
 \,\frac1{[(\ell-1)!]^{1-a}}\\
 &\,\le\, c \left(1+\frac{z}{s+1}\right)\,
 \bigg[\sum_{\ell=s+1}^\infty \ell\,t^{(\ell-1)/a}\bigg]^a\,
 \bigg[\sum_{\ell=s+1}^\infty\frac1{(\ell-1)!}\,
 \left(\frac{c\,z}t\right)^{(\ell-1)/(1-a)}\bigg]^{1-a},
\end{align*}
where the last step follows from H\"older's inequality with the conjugate
pair $1/a$ and $1/(1-a)$.

Consider the equality
\[
 \sum_{\ell=s+1}^\infty \ell\,x^{\ell-1}
 \,=\, \frac{\rd}{\rd x} \left(\frac{x^{s+1}}{1-x}\right)
 \,=\, \frac{x^s}{1-x}\,\left(s+\frac{1}{1-x}\right)\,,
\]
which after substituting in $x=t^{1/a}$ yields the third factor in
\eqref{eq:Edt}.

We also have
\begin{align*}
 \sum_{\ell=s+1}^\infty\frac{y^{(\ell-1)/(1-a)}}{(\ell-1)!}
 &\,=\,\frac{y^{s/(1-a)}}{s!}
 \sum_{\ell=0}^{s-1}\frac{y^{\ell/(1-a)}}{\ell!}
 \,\le\, \exp(y^{1/(1-a)})\,\min\left(1, \frac{y^{s/(1-a)}}{s!}\right),
\end{align*}
with last inequality due to Taylor's theorem. Substituting $y=c\,z/t$
yields the fourth factor in \eqref{eq:Edt}.
\end{proof}

The following corollary summarizes the upper bound on the sum. It
introduces two new free parameters: $s \in \N$ and $t\in (0,1)$.

\begin{corollary}\label{cor:sum}
Let $w(\setu)$ have product and order dependent components satisfying
\eqref{eq:omegas_sum} with $b_2 > 0$, also let $\alpha \geq 1$ and $\alpha
\in (b_1, b_2)$. Then for every $s \in \N$ and every $t\in (0,1)$ we have
\begin{align} \label{eq:sumbound}
 \sum_{\setu\subset\bbN} [w(\setu)]^{1/\alpha}
 \,\le\, &c_1^{1/\alpha}
 \left(1+\sum_{\ell=1}^s(\ell!)^a\,c^\ell\,
 \frac{z^{\ell-1}}{(\ell-1)!}\,\left(1+\frac{z}\ell\right)
 + E_{s,t} \right),
\end{align}
where $a,b,c$ are specified in \eqref{eq:symbols}, and $z$ and $E_{s,t}$ are
as defined in Lemma~\ref{lem:sum}.
\end{corollary}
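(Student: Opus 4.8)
The plan is to derive Corollary~\ref{cor:sum} as an essentially mechanical consequence of Lemma~\ref{lem:sum} applied to the reparametrized sum. First I would start from the identity \eqref{eq:headache}, which already expresses $\sum_{\setu\subset\bbN}[w(\setu)]^{1/\alpha}$ as $c_1^{1/\alpha}$ times the series $\sum_{\ell=0}^\infty (\ell!)^a\,c^\ell \sum_{|\setu|=\ell}\prod_{j\in\setu}j^{-b}$, with $a,b,c$ as in \eqref{eq:symbols}. The substitution $a=b_1/\alpha$, $b=b_2/\alpha$, $c=c_2^{1/\alpha}$ is exactly the content of \eqref{eq:symbols}, so no work is needed there beyond recording it.

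Next I would check that the hypotheses of Lemma~\ref{lem:sum} hold under the assumptions of the corollary, namely $\alpha\ge 1$ and $\alpha\in(b_1,b_2)$, together with $b_2>0$. From $\alpha>b_1$ and $b_1\ge 0$ we get $a=b_1/\alpha\in[0,1)$; strictly, when $b_1=0$ we have $a=0$, which is on the boundary of the interval $(0,1)$ required by Lemma~\ref{lem:sum}, so I would note that the $a=0$ case is handled either by continuity (all the $a$-exponents degenerate to $1$) or directly, since then the series is just $\sum_\ell c^\ell \sum_{|\setu|=\ell}\prod j^{-b}$ and Lemma~\ref{lem:sum-prod} applies verbatim with the H\"older step trivial. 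From $\alpha<b_2$ we get $b=b_2/\alpha>1$, which is the other standing hypothesis of both lemmas. The parameters $s\in\bbN$ and $t\in(0,1)$ are free in Lemma~\ref{lem:sum} and are simply carried over as the two new free parameters advertised in the corollary statement. With these checks in place, Lemma~\ref{lem:sum} gives precisely the bracketed expression in \eqref{eq:sumbound}, and multiplying through by $c_1^{1/\alpha}$ from \eqref{eq:headache} finishes the argument, with $z$ and $E_{s,t}$ inheriting their definitions from Lemma~\ref{lem:sum}.

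There is essentially no main obstacle here: the corollary is a packaging statement that specializes the abstract parameters $(a,b,c)$ of Lemma~\ref{lem:sum} back to the PDE-relevant parametrization \eqref{eq:omegas_sum}. The only point requiring a little care is the boundary case $b_1=0$ (hence $a=0$), since Lemma~\ref{lem:sum} was stated for $a\in(0,1)$; I would dispose of it with a one-line remark as indicated above. I would also double-check the translation of the constraint on $\alpha$: the corollary writes $\alpha\in(b_1,b_2)$ and $b_2>0$, and one should confirm this is consistent with the condition ``$b>1$ and $b>a$'' from \cite[Lemma~10]{KNPSW17} that guarantees finiteness of the series — indeed $b>1\iff\alpha<b_2$ and $b>a\iff b_2>b_1$, the latter being part of \eqref{eq:omegas_sum}, so everything lines up. The proof is therefore short: invoke \eqref{eq:headache}, verify the hypotheses of Lemma~\ref{lem:sum}, apply it, and multiply by $c_1^{1/\alpha}$.
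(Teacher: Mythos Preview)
Your proposal is correct and matches the paper's approach: the paper presents Corollary~\ref{cor:sum} as an immediate summary of Lemma~\ref{lem:sum} applied to the identity \eqref{eq:headache}, without a separate proof, and your write-up simply spells out this packaging together with the hypothesis check on $a,b,c$. Your additional remark on the boundary case $b_1=0$ (hence $a=0$) is a nice touch that the paper itself addresses only later in a separate remark.
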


Note that when $\alpha = 1$ the upper bound \eqref{eq:sumbound} on the sum
is valid, even though in this case the tolerance $T$ in \eqref{eq:T} is
undefined. 

\begin{remark} If $b_1 = 0$ so that $w(\setu)$ is of product form,
then $a = 0$ in \eqref{eq:symbols} and the sum can be bounded above using
the same method as in \cite[equation (7)]{GilW17}, namely, for every $s \in \N$,
\begin{align*}
\sum_{\setu \subset \N} [w(\setu)]^{1/\alpha}
\,&\leq\, c_1^{1/\alpha}
\exp\left(\frac{c}{(b - 1)(s + \frac{1}{2})^{b - 1}}\right)
\prod_{j = 1}^s \left(1 + cj^{-b}\right)\,,
\end{align*}
where $b, c$ are as given in \eqref{eq:symbols}. In this case the size of
the active set will be smaller than the general case because with $b_1 =
0$ each $w(\setu)$ is smaller so the exact value of the sum $\sum_{\setu
\subset \N} [w(\setu)]^{1/ \alpha}$ is smaller, and moreover our upper bound
on the sum is tighter.

Note that the threshold parameter in the construction of optimal and
quasi-optimal active sets in \cite{GilW17} (see Remark~\ref{rem1}) also
requires a good upper bound on the sum with $\alpha = 1$.
\end{remark}

\section{Numerical experiments} \label{sec:num}

Until now we have ignored any theoretical details of the MDM and
focussed purely on the implementation given the arbitrary input parameters
$\e$, $\{\omega(\setu)\}_{|\setu| < \infty}$, $T$ and $\{m_\setu\}_{\setu
\in \calU}$. Below we give some brief details on how to specify these
parameter values for a particular test integrand following the setting of
\cite{KNPSW17}. We compare between QMC MDM and Smolyak MDM, as well as
demonstrate the speedup of our efficient implementations over the naive
implementations.

\subsection{Test integrand}

We consider the integrand $f : [-\tfrac{1}{2}, \tfrac{1}{2}]^\N
\rightarrow \R$ given by
\begin{align}
 \label{eq:f_eg}
 f(\bsx) \,=\, \frac{1}{1 + \sum_{j \geq 1} x_j/j^\beta} \,,
\end{align}
with different parameters $\beta >1$. This integrand with $\beta=2$ has
previously been considered in, e.g., \cite[Example~5]{KNPSW17}.
Although we do not know the exact value of the integral, we are able
to calculate a good approximation as a reference value and use this
reference value to calculate the total error of our MDM algorithms. This
integrand is considered a prototype function for some PDE
applications, see e.g., \cite{KSS12}.

In the theoretical setting of \cite{KNPSW17}, it is assumed that each
decomposition function $f_\setu$ belongs to some normed space $F_\setu$,
and that bounds on the norms are known, i.e., $\|f_\setu\|_{F_\setu} \le
B_\setu$ for all $\setu\ne\emptyset$.

In the particular case of the
anchored decomposition combined with the anchored norm over the shifted
unit cube $[-\tfrac{1}{2},\tfrac{1}{2}]^{|\setu|}$, we have
\begin{align} \label{eq:norm}
\nonumber
 \nrm{f_\setu}{F_\setu}
 &=
 \Bigg(\int_{[-\tfrac{1}{2},\tfrac{1}{2}]^{|\setu|}} \left(\pd{|\setu|}{}{\bsx_\setu} f_\setu(\bsx_\setu)\right)^2
 \,\rd \bsx_\setu\Bigg)^{1/2}\!\\
 &=
 \Bigg(\int_{[-\tfrac{1}{2},\tfrac{1}{2}]^{|\setu|}} \left(\pd{|\setu|}{}{\bsx_\setu} f(\bsx_\setu; \bs0)\right)^2
 \,\rd \bsx_\setu\Bigg)^{1/2},
\end{align}
and the induced operator norm of $I_\setu$ in $F_\setu$
is given by $\|I_\setu\| = 12^{-|\setu|/2} =:C_\setu$
(see \cite[Subsection~5.3]{KNPSW17}). Moreover, for the integrand
\eqref{eq:f_eg} this norm is bounded by (see
\cite[Subsection~5.4]{KNPSW17} for derivation of the case $\beta = 2$)
\[
\nrm{f_\setu}{F_\setu}
\,\leq\,
\left(1 - \tfrac{1}{2}\zeta(\beta)\right)^{-(|\setu| + 1)}
|\setu|!\prod_{j \in \setu} j^{-\beta}
\,=:\, B_\setu,
\]
where $\zeta(x) = \sum_{k = 1}^\infty k^{-x}$ for $x>1$ is the Riemann
zeta function.

\subsection{Active set construction}
We continue under the assumption that $f_\setu$ is in the Hilbert
space whose norm is given by \eqref{eq:norm}.

Following \cite[Formula~(26)]{KNPSW17}, the above description leads to an
active set \eqref{eq:active} with $w(\setu) := C_\setu B_\setu$, which is
in POD form \eqref{eq:omegas_sum} with
\begin{align*}
c_1 \,=\, \frac{1}{1 - \frac{1}{2}\zeta(\beta)}\,,
\quad
c_2 \,=\, \frac{c_1}{\sqrt{12}}\,,
\quad
b_1\,=\, 1,\,
\quad
b_2 \,=\, \beta\,,
\end{align*}
and where the tolerance $T$ is given by \eqref{eq:T}.
We estimate $T$ by the upper bound \eqref{eq:sumbound},
which we compute by fixing $s=1000$,
$t=0.5$, and then maximizing the estimated tolerance for $\alpha$ over $100$ equispaced
points in $(1,\beta)$.

Table~\ref{tab:active} presents details on the active set for the
parameter values $\beta = 4,3,2.5$ and $\e = 10^{-1}, 10^{-2}, 10^{-3}$.
The rows are labelled as follows: $\e$ is the error request, $T$ is the
computed tolerance for the active set, $\supdim$ is the superposition dimension,
$\tau^*$ is the maximum truncation dimension, and the last ten rows
display the number of sets of each size in the active set.

We see that although there are many sets to consider in MDM, even in
the hardest case with $\beta = 2.5$ and $\e=10^{-2}$ we only ever deal
with integrals up to $10$ dimensions, with the highest coordinate
considered being $24724$.

Below we will restrict ourselves to the case $\beta=3$, with error request
down to $\e=10^{-6}$.

Actually, the above approach from \cite{KNPSW17} for prescribing the
parameters $w(\setu)$ and $T$ overestimates the truncation error for our
test integrand and makes the active set much larger than necessary. A
tighter truncation error estimate can be done for this test integrand
using a Taylor series argument (see e.g., \cite[Remark~13]{KNPSW17}) and
this should yield better input parameters $w(\setu)$ and $T$ to our
efficient MDM algorithms. Analysis on the best strategy to prescribe the
active set parameters for any given practical integrand falls outside the
scope of this paper.

\begin{table}[!h]
\centering \small
\begin{tabular}{r|r@{ }r@{ }r|r@{ }r@{ }r|r@{ }r}
\hline
\hline
&&& $\beta=4$ &&& $\beta=3$ && $\beta =2.5$ \\
\hline
\hline
 $\e$ & 1e-1 & 1e-2 & 1e-3 & 1e-1 & 1e-2 & 1e-3 & 1e-1 & 1e-2 \\
 $T$ & 1.4e-4 & 2.8e-6 & 6.4e-8 & 4.0e-6 & 3.6e-8 & 3.8e-10 & 1.5e-8 & 4.9e-11 \\
 $\supdim$ & 3 & 4 & 5 & 5 & 6 & 7  & 8 & 10 \\
 $\tau^*$ & 10 & 28 & 72  & 86 & 418 & 1907 & 2528 & 24724 \\
 \hline
 size 1 &  9 & 26 &  68 &  76 &  370 &  1686 &  2019 &  19750 \\
      2 & 12 & 48 & 159 & 195 & 1285 &  7327 & 10077 & 126882 \\
      3 &  5 & 28 & 132 & 202 & 1828 & 13117 & 21996 & 354377 \\
      4 &  0 &  4 &  36 &  80 & 1234 & 11907 & 26258 & 559155 \\
      5 &  0 &  0 &   1 &  10 &  361 &  5578 & 17874 & 536133 \\
      6 &  0 &  0 &   0 &   0 &   32 &  1145 &  6513 & 313623 \\
      7 &  0 &  0 &   0 &   0 &    0 &    69 &  1088 & 106877 \\
      8 &  0 &  0 &   0 &   0 &    0 &     0 &    47 &  18582 \\
      9 &  0 &  0 &   0 &   0 &    0 &     0 &     0 &   1210 \\
     10 &  0 &  0 &   0 &   0 &    0 &     0 &     0 &      8 \\
\hline
 \hline
\end{tabular}
\caption{\small Results from the active set construction for various
$\beta$ and $\e$.} \label{tab:active}
\end{table}

\subsection{QMC MDM}

For the randomized QMC MDM we use an extensible rank-1 lattice rule with
generating vector
\begin{align*}
  \bsz &=
  ( 1,  756581,  694385,  178383,  437131,  945527,   62405, 1079809,  \\
  &
  \qquad  991997,  750785,  187845, 1666795,  491701, 1092667, \\
  &
  \qquad    1279469,  817683, 1946073, 1946073, 1530387,  686611, \ldots )
  ,
\end{align*}
with $n=2^{m}$ points for any $m=0,\ldots,25$. It is constructed by a
component-by-component (CBC) algorithm as outlined in \cite{CKN06},
but the search criterion was appropriately modified to match the norm
\eqref{eq:norm}. Also, we apply the tent-transform (see \cite{Hic02}) $x
\mapsto 1  - |2x - 1|$ to each component of the shifted quadrature points
in $[0,1]$, and then translate it to $[-\frac{1}{2}, \frac{1}{2}]$.

Following \cite{KNPSW17}, we assume that the quadrature error for each
decomposition term $f_\setu$ to be bounded by $G_{\setu,q} \,
(n_{\setu}+1)^{-q} \, \|f_\setu\|_{F_\setu}$ with appropriate constants
$G_{\setu,q}$ and~$q$; a Lagrange optimization argument then results in
choosing the number of points $n_\setu \ge h_\setu$, with
\begin{equation}\label{eq:h_u}
   h_\setu =
  \bigg(\frac{2}{\e}\sum_{\setv\in \setU}
  \pounds(|\setv|)^{q/(q+1)}\,(G_{\setv,q}\,B_{\setv})^{1/(q+1)}
     \bigg)^{1/q}
  \left(\frac{G_{\setu,q}\,B_{\setu}}
      {\pounds(|\setu|)}\right)^{1/(q+1)},
\end{equation}
where $\pounds(|\setu|)$ is the cost of evaluating the decomposition term
$f_\setu$.

Although the above formula for $h_\setu$ is precisely
\cite[Formula~(28)]{KNPSW17}, how we specify the other parameters here
will deviate from \cite{KNPSW17}. Based on~\eqref{eq:explicit} we set
$\pounds(|\setu|) = \max(2^{|\setu|} |\setu|,1)$. The constant
$G_{\setu,q}$ arising from the theoretical error bound is far too large,
so we set instead $G_{\setu,q}=1$ after some experiments (see below). In
the theoretical setting with the norm \eqref{eq:norm} involving mixed
first order derivatives, we expect only up to first order convergence,
leading to the choice $q=1$. However, it is known from \cite{Hic02} that
randomly-shifted and then tent-transformed lattice rules can achieve
nearly second order convergence if the integrand has mixed second order
derivatives; thus we take instead $q=2$ (see justification by experiments
below) without formally switching the setting. Finally we set $n_\setu =
2^{m_\setu}$ with $m_\setu = \max(\lceil\log_2(h_\setu)\rceil,0)$.

\begin{figure}[!b]
\centering
\includegraphics[scale=0.85]{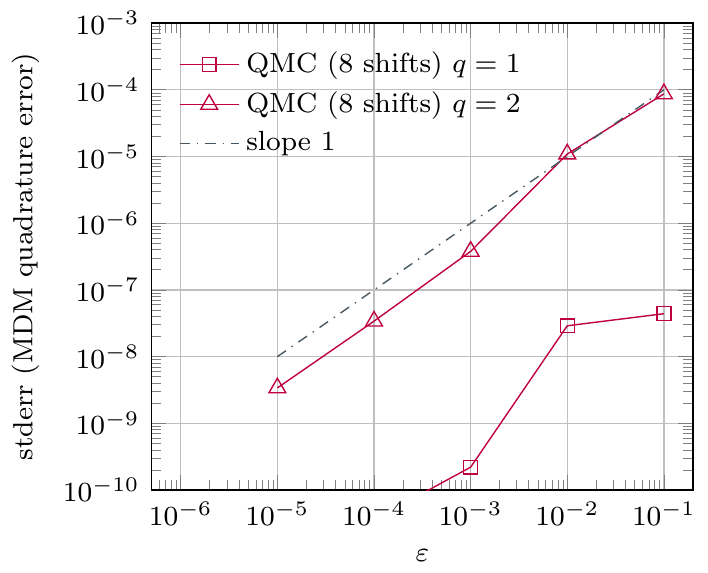} \quad
\includegraphics[scale=0.85]{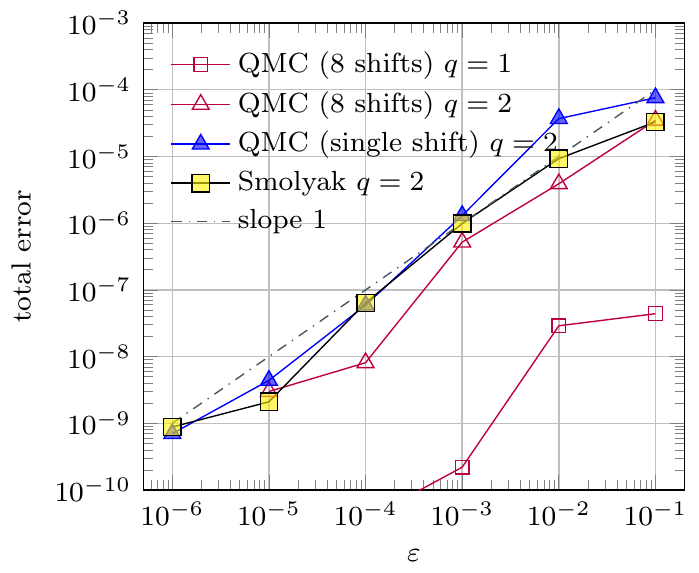}
\caption{\small Error request $\e$ against estimated standard error (left) and total error (right).} \label{fig:q2}
\vspace{-6mm}
\end{figure}

In Figure~\ref{fig:q2} on the left we plot the error request $\e$ against
the estimated standard error obtained using $16$ random shifts. This gives
an idea of the quadrature error alone. By taking $q=2$ instead of $q=1$,
we see that the error request $\e$ and the estimated standard error are in
agreement up to a constant factor. It is possible to tune this constant
factor by changing the value of $G_{\setu,q}$ (it should be at least as
large as $\|I_\setu\| = 12^{-|\setu|/2}$ and indeed the theoretical bound
yields $G_{\setu,q} = g^{|\setu|}$ with some $g>1$), but taking
$G_{\setu,q}=1$ appears to give reasonable results in practice (if it is
too big then the quadrature rules will do too much work, while if it is
too small they will underperform).

In Figure~\ref{fig:q2} on the right we plot the error request $\e$ against
the estimated total error (combining both the truncation error and the
quadrature error) by comparing the results with a reference solution
obtained using a higher precision QMC quadrature rule. The reference
solution was computed using $2^{22}$ QMC points and 16~shifts in quad
precision in 600~dimensions and resulted in a standard error of~$8\times 10^{-13}$. A
similar computation in 800~dimensions with $2^{20}$ QMC points agreed up
to 10~digits. The two graphs in Figure~\ref{fig:q2} show the same trend,
indicating that the truncation error is no worse than the quadrature
error.

\subsection{Smolyak MDM}

For the Smolyak MDM we will use the composite trapezoidal rule as the
one-dimensional rules. These rules are nested. We set $U_0 \coloneqq 0$ to
be the zero algorithm, and following \cite[Section~3]{GG98} we take the
first one-dimensional quadrature rule $U_1$ to be the single (mid-)point
rule, i.e., $n_1 \coloneqq 1$ with point $t_{1, 0} \coloneqq 0$ and weight
$w_{1, 0} \coloneqq 1$. This extra level ensures that the number of points
does not grow too quickly. Then for each $i\ge 2$ we take the
one-dimensional quadrature rule $U_i$ to be the composite trapezoidal rule
in $[-\frac{1}{2}, \frac{1}{2}]$ with $n_i := 2^{i - 1} + 1$ points at
multiples of $1/2^{i-1}$, with the weights being $1/2^{i-1}$ for the
interior points and $1/2^i$ at the two end points $\pm 1/2$.

To ensure that we iterate the points in a nested fashion, we label the
points in the order of $0$, $\pm 1/2$, $\pm 1/4$, $\pm 1/8$, $\pm
3/8,\ldots$ and so on. Explicitly, for $i\ge2$ we can write
\[
 t_{i, k} \,:=\,\begin{cases}
 k/2^p - 1/2 & \text{if } k \text{ is odd, and $p$ is such that $2^{p-1} < k < 2^p$},\\
 - t_{i, k - 1} & \text{if } k \text{ is even,}
 \end{cases}
\]
with the corresponding weights
\begin{align*}
w_{i, k} \,\coloneqq\, \begin{cases}
1/2^{i} & \text{if } k = 1, 2,\\
1/2^{i-1} & \text{if } k = 0, 3, 4, \dots, n_i - 1\,.
\end{cases}
\end{align*}

We choose the approximation levels $m_\setu$ of our quadrature rules in
the direct Smolyak MDM implementation to be such that the number of
function evaluations, see \eqref{eq:smolyak_nb_funevals} with nested
points, is at least $h_\setu$ given by the formula \eqref{eq:h_u}, with
the same definitions of $\pounds(|\setu|) = \max(2^{|\setu|} |\setu|,1)$,
$G_{\setu,q}=1$ and $q=2$. The justification for taking $q=2$ here is that
composite trapezoidal rules are known to give second order convergence for
sufficiently smooth integrands in one dimension, and this convergence is
transferred, modulo log-factors, to the Smolyak rule for multivariate
integrands with sufficient smoothness, see e.g., \cite{GG98}. We use the
same values of $m_\setu$ for the combination technique variant even though
the actual numbers of function evaluations are higher.

In particular, our chosen values of $m_\setu$ mean that the naive
implementations of Smolyak MDM and QMC MDM would use roughly the same
number of function evaluations for each $f_\setu$. However, the actual
number of function evaluations for our efficient implementations would be
lower and hence achieve the savings we aim for; see the timings in the
next subsection.

In Figure~\ref{fig:q2} on the right we also plot the error request $\e$
against the estimated total error of the direct Smolyak MDM implementation
compared with the same reference solution as for QMC MDM. We see that all
results are comparable.

\subsection{Timing results}\label{sec:quad_res}

In Table~\ref{tab:time} we present results on the run-time of our
efficient MDM implementations compared with the naive implementations for
error request $\e$ from $10^{-1}$ down to $10^{-6}$. We include results
for QMC MDM with a single random shift, direct Smolyak MDM, and Smolyak
MDM based on combination technique (CT-Smolyak). We report the total error
with respect to the same reference solution in each case, as well as the run-time
in seconds. The QMC results can vary between runs depending on the random
shift. All calculations were done in x86 long double precision on a single
node of the UNSW Katana cluster with an Intel Xeon X5675 3.07GHz CPU.

The values of $t_\mathrm{act}$ in the first column are the time in seconds used to
construct the active set (Pseudocode 1), while the values of $t_\mathrm{ext}$ are the
average time used to construct the extended active set and compute the
three sets of coefficients \eqref{eq:c-smol}, \eqref{eq:c-comb},
\eqref{eq:c-qmc} (Pseudocode 2A, 2A$'$, 2B).

We clearly see increasing speedup of the efficient implementations
over the naive ones as $\e$ decreases. The reformulation of the MDM
algorithm into this efficient formulation is the main result of this paper.

We see more speedup in the case of Smolyak MDM compared with QMC MDM. This
is as expected, because for QMC MDM there is extra work in managing the
different positions that a nonempty set can originate from as a subset of
another set in the active set: we need to cope with a more complicated
data structure for \eqref{eq:c-qmc} compared with \eqref{eq:c-smol} or
\eqref{eq:c-comb}, and we need more function evaluations. Additionally, we
expect the QMC algorithms to be much more efficient when the truncation
dimension goes up (i.e., when $\e$ goes down), since the sizes of the
Smolyak grids based on trapezoidal rules then increase faster than the 
powers of $2$ of the QMC algorithms.

\begin{table}[!h]
 \centering \footnotesize
 \begin{tabular}{l|l|ll|ll|l@{}}
\multicolumn{7}{l}{$\beta=3$, reference value $=1.1011984577041$} \\
\hline\hline
&& \multicolumn{2}{c|}{efficient} & \multicolumn{2}{c|}{naive} & \\
$\varepsilon$ & method & total error & time (s) & total error & time (s) & speedup \\
\hline\hline
1e-01                   & 
QMC        & 7.57e-05   & 0.0017576  & 7.57e-05   & 0.0032837  & 1.9 \\
\hspace*{1mm} $t_\mathrm{act}=$ 0.000768 & Smolyak    & 3.26e-05   & 0.0047466  & 3.26e-05   & 0.0063622  & 1.3 \\
\hspace*{1mm} $t_\mathrm{ext}=$ 0.00339 & CT-Smolyak & 3.26e-05   & 0.0042816  & 3.26e-05   & 0.0088774  & 2.1 \\
\hline
1e-02                   & 
QMC        & 3.66e-05   & 0.062643   & 3.66e-05   & 0.067456   & 1.1 \\
\hspace*{1mm} $t_\mathrm{act}=$ 0.00899 & Smolyak    & 9.34e-06   & 0.074826   & 9.34e-06   & 0.12321    & 1.6 \\
\hspace*{1mm} $t_\mathrm{ext}=$ 0.049 & CT-Smolyak & 9.34e-06   & 0.073692   & 9.34e-06   & 0.19568    & 2.7 \\
\hline
1e-03                   & 
QMC        & 1.26e-06   & 0.4301     & 1.26e-06   & 1.1336     & 2.6 \\
\hspace*{1mm} $t_\mathrm{act}=$ 0.0401 & Smolyak    & 9.92e-07   & 0.49712    & 9.92e-07   & 1.9859     & 4.0 \\
\hspace*{1mm} $t_\mathrm{ext}=$ 0.339 & CT-Smolyak & 9.92e-07   & 0.48502    & 9.92e-07   & 3.4984     & 7.2 \\
\hline
1e-04                   & 
QMC        & 5.90e-08   & 4.8547     & 5.90e-08   & 15.766     & 3.2 \\
\hspace*{1mm} $t_\mathrm{act}=$ 0.34 & Smolyak    & 6.39e-08   & 5.5186     & 6.39e-08   & 27.89      & 5.1 \\
\hspace*{1mm} $t_\mathrm{ext}=$ 4.08 & CT-Smolyak & 6.39e-08   & 5.5692     & 6.39e-08   & 53.191     & 9.6 \\
\hline
1e-05                   & 
QMC        & 4.41e-09   & 47.64      & 4.51e-09   & 188.61     & 4.0 \\
\hspace*{1mm} $t_\mathrm{act}=$ 2.79 & Smolyak    & 2.13e-09   & 54.331     & 2.12e-09   & 346.79     & 6.4 \\
\hspace*{1mm} $t_\mathrm{ext}=$ 41.7 & CT-Smolyak & 2.11e-09   & 56.083     & 2.12e-09   & 734.87     & 13.1 \\
\hline
1e-06                   & 
QMC        & 7.01e-10   & 442.8      & 4.31e-10   & 2163.1     & 4.9 \\
\hspace*{1mm} $t_\mathrm{act}=$ 20.2 & Smolyak    & 8.76e-10   & 504.78     & 1.14e-09   & 4093.9     & 8.1 \\
\hspace*{1mm} $t_\mathrm{ext}=$ 435 & CT-Smolyak & 2.08e-09   & 535.74     & 1.14e-09   & 9255.4     & 17.3 \\
\hline
\hline
\end{tabular}
 \caption{\small Timing comparisons between efficient and naive MDM
 implementations. 
 $t_\mathrm{act}$ is the time to construct $\calU$ and $t_\mathrm{ext}$ 
 is the average time to construct $\Uext$ for the 3 reformulations.} \label{tab:time}
\end{table}

\begin{figure}[!h]
\centering
\includegraphics[scale=0.85]{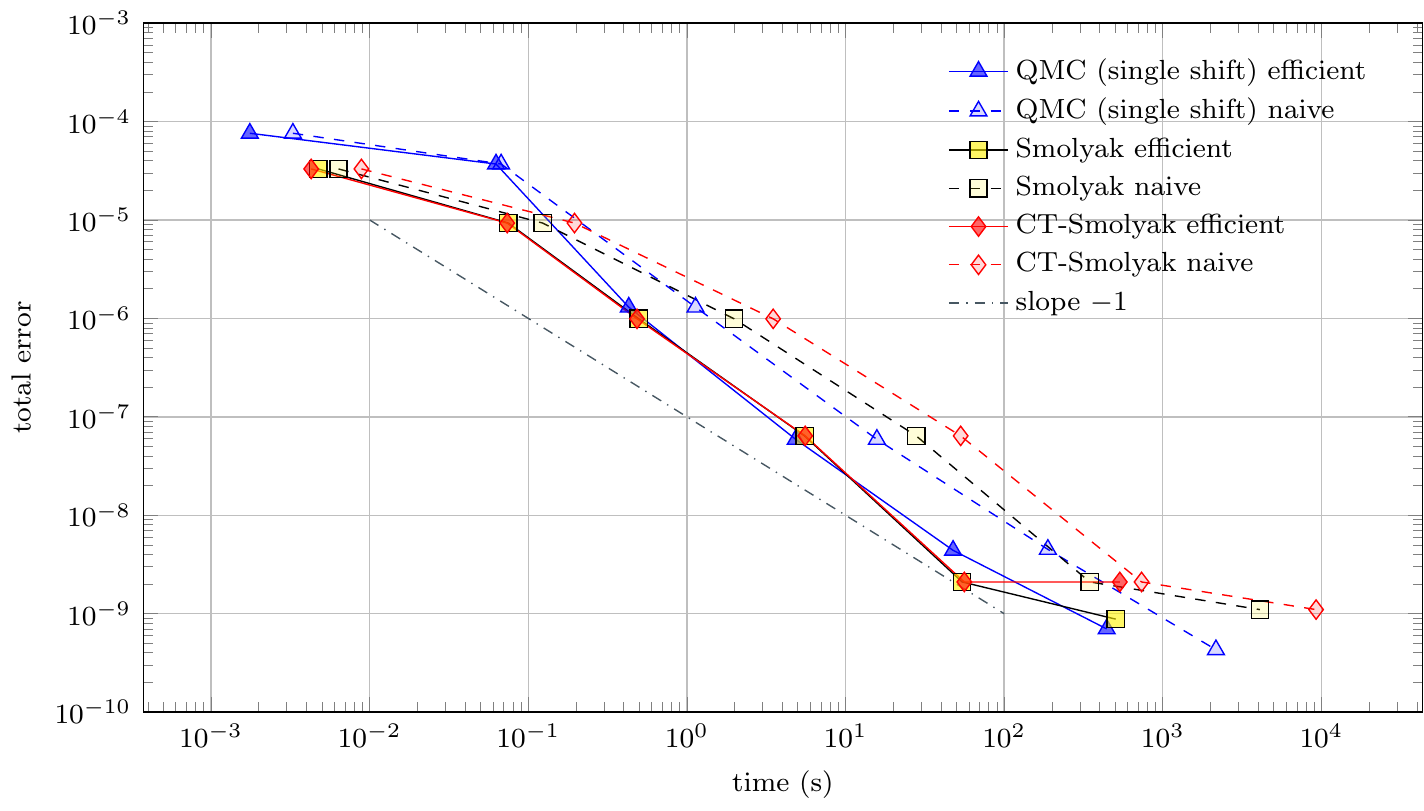}
\caption{\small Estimated total error against time.} \label{fig:speed}
\end{figure}

If we compare the direct Smolyak MDM with the combination technique
Smolyak MDM then we notice that for our efficient reformulation the two
methods have very similar running times, with a very minor advantage for
the direct method, but with the naive formulation the direct method is
the clear winner.
Note that
we can see the effect of rounding errors in the calculations for
$\e=10^{-5}$ and $10^{-6}$, since the total errors should remain the same
between the naive and efficient implementations of the same algorithm,
while the direct Smolyak and combination technique should also have the
same total errors.

In Figure~\ref{fig:speed} we plot the total error against time for the
results in Table~\ref{tab:time} to demonstrate the speedup of the
efficient implementations. For each pair of efficient and naive
results, we expect the data points to be at the same horizontal level
(same total error) but with bigger and bigger gaps in time (the speedup
factor increases) as the errors go down.


\section{Conclusion}

The MDM is a powerful algorithm for approximating integrals of
$\infty$-variate functions, but care must be taken to ensure the
implementation is efficient. In this paper we have provided details and
explicit pseudocodes explaining how to efficiently run all components of
the algorithm: from constructing the active set to running the MDM
 for both randomized QMC rules and Smolyak quadrature rules.
By reformulating the MDM we are able to save cost by reducing the amount
of repeated function evaluations incurred because of the recursive
structure of the anchored decomposition. We applied the MDM to an example
integrand that possesses similar properties to those that arise in recent
PDE problems with random coefficients. The numerical results clearly
support the cost savings of the efficient reformulations.

\subsection*{Acknowledgements}

We gratefully acknowledge the financial support from the Australian
Research Council (FT130100655 and DP150101770), the KU Leuven research
fund (OT:3E130287 and C3:3E150478), the Taiwanese National Center for
Theoretical Sciences (NCTS) -- Mathematics Division, and the Statistical
and Applied Mathematical Sciences Institute (SAMSI) 2017 Year-long Program
on Quasi-Monte Carlo and High-Dimensional Sampling Methods for Applied
Mathematics.

\vfill
\noindent\textbf{Authors' addresses}\\
\noindent Alexander D. Gilbert\\
School of Mathematics and Statistics, The University of New South Wales\\
Sydney, NSW 2052, Australia\\
\texttt{adgilbert91@gmail.com}\\

\noindent Dirk Nuyens\\
Department of Computer Science, KU Leuven\\
Celestijnenlaan 200A, 3001 Heverlee, Belgium\\
\texttt{dirk.nuyens@cs.kuleuven.be}\\

\noindent Frances Y. Kuo\\
School of Mathematics and Statistics, The University of New South Wales\\
Sydney, NSW 2052, Australia\\
\texttt{f.kuo@unsw.edu.au}\\

\noindent Grzegorz W. Wasilkowski\\
Department of Computer Science, University of Kentucky\\
301 David Marksbury Building, Lexington, KY 40506, USA\\
\texttt{greg@cs.uky.edu}\\


\begin{thebibliography}{99}

\setlength{\parsep }{-0.5ex} \setlength{\itemsep}{-0.5ex}
\newcommand\BAMS{\emph{Bull. Amer. Math. Soc.\ }}
\newcommand\BIT{\emph{BIT\ }}
\newcommand\Com{\emph{Computing\ }}
\newcommand\CA{\emph{Constr. Approx.\ }}
\newcommand\FCM{\emph{Found. Comput. Math.\ }}
\newcommand\JAT{\emph{J. Approx. Th.\ }}
\newcommand\JC{\emph{J. Complexity}}
\newcommand\JCP{\emph{J. of Computational Physics\ }}
\newcommand\JMA{\emph{SIAM J. Math. Anal.\ }}
\newcommand\JMAA{\emph{J. Math. Anal. Appl.\ }}
\newcommand\JMM{\emph{J. Math. Mech.\ }}
\newcommand\JMP{\emph{J. Math. Physics\ }}
\newcommand\MC{\emph{Math. Comp.\ }}
\newcommand\NA{\emph{Numer. Alg.}}
\newcommand\NM{\emph{Numer. Math.\ }}
\newcommand\RMJ{\emph{Rocky Mt. J. Math.\ }}
\newcommand\SJNA{\emph{SIAM J. Numer. Anal.\ }}
\newcommand\SR{\emph{SIAM Rev.\ }}
\newcommand\TAMS{\emph{Trans. Amer. Math. Soc.\ }}
\newcommand\TCS{\emph{Theoretical Computer Science\ }}
\newcommand\TOMS{\emph{ACM Trans. Math. Software\ }}
\newcommand\USSR{\emph{USSR Comput. Maths. Math. Phys.\ }}
\frenchspacing

\bibitem{CKN06}
\textsc{R. Cools, F. Y. Kuo and D. Nuyens},
\textit{Constructing embedded lattice rules for multivariate integration},
SIAM J. Sci. Comp., 28 (2006), pp. 2162--2188.

\bibitem{DKS13}
\textsc{J. Dick, F. Y. Kuo and I. H. Sloan},
\textit{High-dimensional integration: The quasi-Monte Carlo way},
Acta Numerica, 22 (2013), pp. 133--288.

\bibitem{GG98}
\textsc{T. Gerstner and M. Griebel},
\textit{Numerical integration using sparse grids},
Numer. Alg., 18 (1998), pp. 209--232.

\bibitem{GilW17}
\textsc{A. D. Gilbert and G. W. Wasilkowski},
\textit{Small superposition dimension and active set construction for
multivariate integration under modest error demand},
J. Complexity, 42 (2017), pp. 94--109.

\bibitem{GSZ92}
\textsc{M. Griebel, M. Schneider and C. Zenger},
\textit{A combination technique for the solution of sparse grid problems},
in Iterative Methods in Linear Algebra,
R. Bequwens, P. de Groen (eds.), Elsevier, North Holland, (1992), pp. 263--281.

\bibitem{Hic02}
\textsc{F. J. Hickernell},
\textit{Obtaining $O(n^{-2 + \epsilon})$ convergence for lattice quadrature rules},
in Monte Carlo and Quasi-Monte Carlo Methods 2000,
K. T. Fang, F. J. Hickernell, H. Niederreiter (eds.), Springer, Berlin-Heidelberg,
(2002), pp. 274--289.

\bibitem{KNPSW17}
\textsc{F. Y. Kuo, D. Nuyens, L. Plaskota, I. H. Sloan and G. W.
    Wasilkowski},
\textit{Infinite-dimensional integration and the multivariate
    decomposition method},
J. Comput. Appl. Math., 326 (2017), pp. 217--234.

\bibitem{KSS12}
\textsc{F. Y. Kuo, Ch. Schwab and I. H. Sloan},
\textit{Quasi-Monte Carlo finite element methods for a class
     of partial differential equations with random coefficients},
SIAM J. Numer. Anal., 50 (2012), pp. 3351--3374.

\bibitem{KSWW10a}
\textsc{F. Y. Kuo, I. H. Sloan, G. W. Wasilkowski and H. Wo\'zniakowski},
\textit{Liberating the dimension},
J. Complexity, 26 (2010), pp. 422--454.

\bibitem{KSWW10b}
\textsc{F. Y. Kuo, I. H. Sloan, G. W. Wasilkowski and H. Wo\'zniakowski},
\textit{On decompositions of multivariate functions},
Math. Comp., 79 (2010), 953--966.

\bibitem{Nie92}
 \textsc{H. Niederreiter},
  \textit{Random Number Generation and Quasi-Monte Carlo Methods},
  SIAM, Philadelphia, 1992.

\bibitem{PW11}
\textsc{L. Plaskota and G. W. Wasilkowski},
\textit{Tractability of infinite-dimensional integration in the worst case
   and randomized settings},
J. Complexity, 27 (2011), pp. 505--518.

\bibitem{PW14}
\textsc{L. Plaskota and G. W. Wasilkowski},
\textit{Efficient algorithms for multivariate and $\infty$-variate
   integration with exponential weight},
Numer. Alg., 67 (2014), pp. 385--403.

\bibitem{SJ94}
  \textsc{I. H. Sloan and S. Joe},
  \textit{Lattice Methods for Multiple Integration},
  Oxford University Press, Oxford, 1994.

\bibitem{Smol63}
\textsc{S. A. Smolyak},
\textit{Quadrature and interpolation formulas for tensor products of certain
   classes of functions},
Dokl. Acad. Nauk. SSSR, 4 (1963), pp. 240--243.

\bibitem{Was13a}
\textsc{G. W. Wasilkowski},
\textit{On tractability of linear tensor product problems for $\infty$-variate
   classes of functions},
J. Complexity, 29 (2013), pp. 351--369.

\bibitem{Was14a}
\textsc{G. W. Wasilkowski},
\textit{Tractability of approximation of $\infty$-variate functions with bounded
   mixed partial derivatives},
J. Complexity, 30 (2014), pp. 325--346.

\bibitem{WW95}
\textsc{G. W. Wasilkowski and H. Wo\'zniakowski},
\textit{Explicit cost bounds of algorithms for multivariate tensor product problems},
 J. Complexity, 11 (1995), pp. 1--56.

\end{thebibliography}
\end{document}